\newcommand\cyr{%
\renewcommand\rmdefault{wncyr}%
\renewcommand\sfdefault{wncyss}%
\renewcommand\encodingdefault{OT2}%
\normalfont
\selectfont}
\DeclareTextFontCommand{\textcyr}{\cyr} 
\DeclareFontFamily{OT1}{rsfs}{}
\DeclareFontShape{OT1}{rsfs}{n}{it}{<-> rsfs10}{}
\DeclareMathAlphabet{\mathscr}{OT1}{rsfs}{n}{it}
\numberwithin{equation}{section}
\newtheorem{theorem}{Theorem}[section]
\newtheorem{lemma}[theorem]{Lemma}
\newtheorem{proposition}[theorem]{Proposition}
\newtheorem{corollary}[theorem]{Corollary}
\newtheorem{Problem}{Problem}
\theoremstyle{definition}
\newtheorem{definition}[theorem]{Definition}
\newtheorem{remark}[theorem]{Remark}
\theoremstyle{remark}
\newtheorem{acknowledgement}{Acknowledgement}
\newcommand{\Spec}{\operatorname{Spec}}
\newcommand{\V}{\operatorname{V}}
\newcommand{\Ext}{\operatorname{Ext}}
\newcommand{\Hom}{\operatorname{Hom}}
\newcommand{\Frac}{\operatorname{Frac}}
\newcommand{\fm}{\frak{m}}
\newcommand{\fp}{\frak{p}}
\newcommand{\fM}{\frak{M}}
\newcommand{\Frob}{\operatorname{Frob}}
\begin{document}
\title[A study of perfectoid rings via Galois cohomology]
{A study of perfectoid rings via Galois cohomology}

\author[R. Kinouchi]{Ryo Kinouchi}
\address{Department of Mathematics, Institute of Science Tokyo, 2-12-1 Ookayama, Meguro, Tokyo 152-8551, Japan}
\email{kinouchi.r.aa@gmail.com}

\author[K. Shimomoto]{Kazuma Shimomoto}
\address{Department of Mathematics, Institute of Science Tokyo, 2-12-1 Ookayama, Meguro, Tokyo 152-8551, Japan}
\email{shimomotokazuma@gmail.com}

\thanks{2020 {\em Mathematics Subject Classification\/}: 12G05, 13A18, 13A35, 13B05, 13B35, 13B40, 14G45}

\keywords{Almost purity theorem, \'etale map, Galois cohomology, perfectoid ring, tilting operation.}


\begin{abstract} 
In his foundational study of $p$-adic Hodge theory, Faltings introduced the method of almost \'etale extensions to establish fundamental comparison results of various $p$-adic cohomology theories. Scholze introduced the tilting operations to study algebraic objects arising from $p$-adic Hodge theory in mixed characteristic via the Frobenius map. In this article, we prove a few results which clarify certain ring-theoretic or homological properties of the tilt of an extension between perfectoid rings treated in the construction of big Cohen-Macaulay algebras.
\end{abstract}

\maketitle

\section{Introduction}

Recently, it has become more obvious that perfectoid geometry is a powerful tool for solving certain problems of mixed characteristic in algebraic geometry and commutative algebra. However, one is forced to encounter non-Noetherian rings/spaces as building blocks of perfectoid geometry for which the method of traditional commutative ring theory does not work effectively. The present paper arises from an attempt to understand the breakthrough result \cite{Bh20} and unveil algebraic structure of some important commutative rings, such as $p$-adic period rings which are fundamental to $p$-adic Hodge theory. Now, let us start with a history of $p$-adic rings studied by Faltings in \cite{Fa02} with further extensions done by Andreatta in \cite{An06}. Let $R$ be an \'etale algebra over $V[T_1,1/T_1,\ldots,T_n,1/T_n]$, where $V$ is a complete discrete valuation ring with perfect residue field of mixed characteristic $p>0$. Assume that $R/pR$ is not a zero ring and that $R \otimes_V \V_\infty$ is an integral domain, where $V_\infty$ is a non-discrete valuation domain obtained by adjoining all $p$-power roots of $p$ and $1$ to $V$. Let $R_\infty$ be an integral domain such that  $R_\infty$ is obtained by adoining all $p$-power roots of $T_i$'s to $R_{V_\infty}:=R \otimes_V V_\infty$. Let $T$ be the maximal \'etale extension domain of $R_{V_{\infty}}[1/p]$. Denote by $R_{\infty,p}$ the integral closure of $R$ in $T$. Then we have $R_{V_{\infty}} \subseteq R_\infty \subseteq R_{\infty,p}$. Let $G_\infty$ be the Galois group of $R_{\infty}$ over $R_{V_{\infty}}$ and let $G$ be the Galois group of $R_{\infty,p}$ over $R_{V_{\infty}}$. Then Faltings proved the following fundamental result called the \textit{Almost purity theorem}.

\begin{theorem}[Faltings]
Let the notation be as above. Then $R_{\infty} \to R_{\infty,p}$ is an inductive colimit of $(p)^{1/p^\infty}$-almost finite \'etale extensions and the $p$-adic completion of $R_{\infty,p}$ is perfectoid. Moreover, the natural map of Galois cohomology groups
$$
H^i(G_\infty,R_\infty/pR_\infty) \to H^i(G,R_{\infty,p}/pR_{\infty,p})
$$
is a $(p)^{1/p^\infty}$-almost isomorphism for every $i \ge 0$.
\end{theorem}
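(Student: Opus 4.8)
The plan is to follow the strategy of Faltings' original proof: reduce both the almost finite \'etale claim and the cohomology comparison to the level of finite subextensions, where the content becomes a bound on the different (equivalently, almost-perfectness of the trace pairing), and then propagate everything through a filtered colimit and a Hochschild--Serre spectral sequence. I would write $R_{\infty,p}=\colim_\lambda S_\lambda$, where $S_\lambda$ is the integral closure of $R_\infty$ in a finite subextension $T_\lambda\subseteq T$ of $R_{V_\infty}[1/p]$, put $N=\Gal(R_{\infty,p}/R_\infty)=\ker(G\twoheadrightarrow G_\infty)$, and let $N_\lambda\subseteq N$ be the open subgroups with $\Gal(S_\lambda/R_\infty)=N/N_\lambda$. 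Since the formation of $(p)^{1/p^\infty}$-almost finite \'etale extensions commutes with filtered colimits and continuous cohomology of a profinite group commutes with filtered colimits of discrete coefficients, it suffices to prove: (i) each $R_\infty\to S_\lambda$ is $(p)^{1/p^\infty}$-almost finite \'etale; and (ii) $H^q(N,R_{\infty,p}/pR_{\infty,p})$ is $(p)^{1/p^\infty}$-almost isomorphic to $R_\infty/pR_\infty$ for $q=0$ and to $0$ for $q>0$ --- whence the Hochschild--Serre spectral sequence for $1\to N\to G\to G_\infty\to1$ degenerates almost and the inflation map in the theorem is an almost isomorphism.

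For (i), the heart of the matter is to bound the different $\mathfrak{d}_{S_\lambda/R_\infty}$ from below by $(p)^{1/p^\infty}$. Because $R_{V_\infty}[1/p]\to T_\lambda[1/p]$ is \'etale, the only ramification of $R_{V_\infty}\to S_\lambda$ occurs along the divisors $\{T_1\cdots T_n=0\}$ and $\{p=0\}$. Passing from $R_{V_\infty}$ to $R_\infty=R_{V_\infty}[T_i^{1/p^\infty}]$ extracts all $p$-power roots of the $T_i$, and --- together with the fact that $V_\infty$ already contains all $p$-power roots of $p$, so that the different of $V\to V_\infty$ is $(p)^{1/p^\infty}$-almost the unit ideal --- the standard ramification computation for extracting $p$-power roots shows that $\mathfrak{d}_{S_\lambda/R_\infty}$ contains $(p)^{1/p^\infty}$ at every codimension-one prime. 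By the tower formula, together with the general fact that a finite extension of normal excellent domains whose different is $(p)^{1/p^\infty}$-almost trivial at all codimension-one primes already has $(p)^{1/p^\infty}$-almost trivial different globally (reflexivity of the trace-dual module; the Gabber--Ramero almost-purity descent), the trace pairing $S_\lambda\otimes_{R_\infty}S_\lambda\to R_\infty$ is $(p)^{1/p^\infty}$-almost perfect. This is exactly the assertion that $R_\infty\to S_\lambda$ is $(p)^{1/p^\infty}$-almost finite \'etale, and taking the colimit gives the first sentence of the theorem. Perfectoidness of the $p$-adic completion of $R_{\infty,p}$ then follows formally: $R_\infty^{\wedge}$ is perfectoid (as $V_\infty$ contains a compatible system of $p$-power roots of $p$ and $R_\infty$ is obtained from $R_{V_\infty}$ by adjoining $p$-power roots of the $T_i$), and an almost finite \'etale extension of a perfectoid ring has perfectoid $p$-completion by the tilting correspondence.

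For (ii), given (i), each $S_\lambda/pS_\lambda$ is an almost $N/N_\lambda$-Galois extension of $R_\infty/pR_\infty$: it is almost projective over $R_\infty/pR_\infty$, and the natural map $S_\lambda/pS_\lambda\otimes_{R_\infty/pR_\infty}S_\lambda/pS_\lambda\to\prod_{g\in N/N_\lambda}S_\lambda/pS_\lambda$ is an almost isomorphism. The almost version of Galois descent (or of the normal basis theorem) then gives that $H^q(N/N_\lambda,S_\lambda/pS_\lambda)$ is $(p)^{1/p^\infty}$-almost $R_\infty/pR_\infty$ for $q=0$ and almost $0$ for $q>0$; since $N$ acts on $S_\lambda/pS_\lambda$ through $N/N_\lambda$ and continuous cohomology commutes with the filtered colimit $R_{\infty,p}/pR_{\infty,p}=\colim_\lambda S_\lambda/pS_\lambda$, this yields the claim about $H^q(N,R_{\infty,p}/pR_{\infty,p})$ made in (ii), and the spectral-sequence degeneration completes the proof.

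The main obstacle is the different computation in the second paragraph: rigorously establishing that the ramification of $R_{V_\infty}\to S_\lambda$ is confined to $\{T_1\cdots T_n=0\}\cup\{p=0\}$ and that extracting $p$-power roots renders it $(p)^{1/p^\infty}$-almost trivial requires either the full Gabber--Ramero almost-ramification machinery (almost purity for the base pair together with an Abhyankar-type lemma) or, following Scholze, tilting to characteristic $p$, where almost purity reduces to the statement that finite \'etale covers of the tilt of $R_\infty^{\wedge}$ punctured along the toric boundary $\{T_1\cdots T_n=0\}$ extend to $(p)^{1/p^\infty}$-almost finite \'etale covers of the whole tilt, proved there by a Frobenius-twisting argument and the classical analysis of tame ramification. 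Everything downstream --- the perfectoidness claim and the collapse of the Hochschild--Serre spectral sequence --- is formal manipulation in the category of almost modules once that input is in hand.
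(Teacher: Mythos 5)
The paper does not prove this theorem; it is stated as background, attributed to Faltings \cite{Fa02}, with no argument given in the text. So your proposal is a reconstruction of Faltings' (and Gabber--Ramero's, Scholze's) proof rather than something to be checked against the paper, and as such it is a faithful sketch of the standard strategy: decompose $R_{\infty,p}$ as a filtered colimit of finite normal subextensions $S_\lambda$, prove each $R_\infty\to S_\lambda$ is $(p)^{1/p^\infty}$-almost finite \'etale via a bound on the different, and then feed the resulting almost-Galois structure into the Hochschild--Serre spectral sequence for $1\to N\to G\to G_\infty\to 1$. You correctly identify the different bound as the technical heart of the matter and correctly observe that everything downstream is formal almost-mathematics.

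Two caveats worth flagging. First, in the paper's setup $R$ is \'etale over $V[T_1,1/T_1,\ldots,T_n,1/T_n]$, so the $T_i$ are units in $R$; the ``toric boundary'' $\{T_1\cdots T_n=0\}$ is empty, and the ramification of $R_{V_\infty}\to S_\lambda$ is confined to $\{p=0\}$ alone. Your description of the ramification locus is adapted to the variant where $R$ is \'etale over $V[T_1,\ldots,T_n]$ (or to the later local setting $W(k)[[x_2,\ldots,x_d]]$ in the body of the paper), so it is slightly mismatched with the statement at hand, though the structure of the argument --- extracting enough $p$-power roots to almost-trivialize the different --- is the same. Second, the identity $H^q(N,R_{\infty,p}/pR_{\infty,p})\cong\colim_\lambda H^q(N/N_\lambda,S_\lambda/pS_\lambda)$ requires $(R_{\infty,p}/pR_{\infty,p})^{N_\lambda}$ to be almost identified with $S_\lambda/pS_\lambda$, which is not automatic (the reduction mod $p$ of invariants need not equal invariants of the reduction); this is exactly where Proposition \ref{AlmostGaloisVanish}-type almost-vanishing for $H^1$ is needed to close the gap, and it is worth stating explicitly rather than folding it into ``continuous cohomology commutes with filtered colimits.''
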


This theorem is essential in the proof of Hodge-Tate comparison theorem, in which the ring $R_{\infty,p}$ plays a pivotal role. Its ring-theoretic properties were studied in \cite{An06}. However, the method employed in \cite{An06} relies on a delicate study of ramification theory. In this article, we construct analogues of $R_\infty$ and $R_{\infty,p}$ by taking $R$ to be an unramifed complete regular local ring of mixed characteristic and use the \textit{tilting operation} (see Definition \ref{tiltingring}) from perfectoid geometry to clarify the algebraic structure of $R_{\infty,p}$ and the tilt of the integral extension $R_\infty \to R_{\infty,p}$. For instance, it was proved in \cite{Shi16} that $R_{\infty,p}$ is an almost Cohen-Macaulay algebra. As another motivational background, let us first recall the following remarkable result. We refer the reader to \cite{Bh20}, \cite{HH92}, \cite{HL07} and \cite{Q16} for the statements and proofs.

\begin{theorem}[Bhatt, Hochster-Huneke]
\label{BigCM1}
Let $(R,\fm,k)$ be a excellent local domain of residual characteristic $p>0$. Denote by $R^+$ the absolute integral closure of $R$ and by $\widehat{R^+}$ the $p$-adic completion of $R^+$. Then $\widehat{R^+}$ is a balanced big Cohen-Macaulay $R$-algebra.
\end{theorem}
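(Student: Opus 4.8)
The plan is to reduce to a complete regular local base ring and then treat equal and mixed characteristic by completely different arguments. Since $R$ is excellent, $\widehat{R}$ is reduced, and passing to $\widehat{R}/\fp$ for a minimal prime $\fp$ alters neither the hypotheses nor the ring $R^+$ (the total fraction field changes only by a finite extension), so I may assume $R$ is a complete Noetherian local domain. By the Cohen structure theorem $R$ is module-finite over a complete regular local subring $A$, and as $\Frac(A)\subseteq\Frac(R)$ is finite one has $A^+=R^+$, hence $\widehat{A^+}=\widehat{R^+}$. It then suffices to prove: if $\underline{x}=x_1,\dots,x_d$ is a regular system of parameters of $A$ — chosen with $x_1=p$ when $A$ has mixed characteristic — then $H^i_\fm(\widehat{R^+})=0$ for $i<d$ and $\widehat{R^+}/\fm\widehat{R^+}\neq 0$. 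Since $H^i_\fm$ does not depend on the choice of system of parameters, this yields the \emph{balanced} big Cohen--Macaulay property over $A$ once one knows the mild completeness needed to match Koszul depth with local-cohomology depth, and hence also over the module-finite extension $R$.

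In equal characteristic $p$ is zero in $R$, so $\widehat{R^+}=R^+$, and $R^+$ is a perfect $\mathbb{F}_p$-algebra because it is absolutely integrally closed. Here I would run the Hochster--Huneke / Huneke--Lyubeznik argument \cite{HH92,HL07}: any cocycle witnessing a failure of the Cohen--Macaulay property already lives in some module-finite normal domain $R'\supseteq R$; as $R'$ is module-finite over the regular ring $A$, the modules $H^i_\fm(R')$ are finitely generated for $i<d$, so the obstruction is killed by a power of $\fm$; the \emph{equational lemma} then shows that after applying a sufficiently high power of Frobenius the obstruction becomes the image of a root of an explicit monic equation, which generates a further module-finite extension $R''\subseteq R^+$ in which it vanishes. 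Taking the filtered colimit over all module-finite normal extensions of $R$ inside $R^+$ gives $H^i_\fm(R^+)=0$ for $i<d$.

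In mixed characteristic I would proceed in two steps. \textbf{(Almost Cohen--Macaulayness.)} Let $A_\infty$ be the $p$-adic completion of $\bigcup_n A[p^{1/p^n},x_2^{1/p^n},\dots,x_d^{1/p^n}]$, a perfectoid ring; then $\widehat{R^+}$ is itself a perfectoid ring containing $A_\infty$, just as for the rings appearing in Faltings' theorem above, and the closely related almost Cohen--Macaulayness of such rings was established in \cite{Shi16}. The almost purity theorem together with Andr\'e's perfectoid Abhyankar lemma then shows that $H^i_\fm(\widehat{R^+})$ is $(p)^{1/p^\infty}$-almost zero for $i<d$ while $\widehat{R^+}/\fm\widehat{R^+}$ is $(p)^{1/p^\infty}$-almost nonzero. \textbf{(Removing the ``almost''.)} This is the heart of Bhatt's theorem \cite{Bh20}: using prismatic (equivalently $q$-de Rham) cohomology one sets up a $p$-adic Riemann--Hilbert comparison identifying $\RGamma_\fm(\widehat{R^+}/p)$ with an $\mathbb{F}_p$-\'etale cohomology complex of the punctured spectrum of $\widehat{R^+}/p$, realized as a punctured affinoid perfectoid space of dimension $d-1$; Scholze's Artin-type vanishing theorem for affinoid perfectoid spaces, which bounds the relevant $\mathbb{F}_p$-cohomological dimension by the dimension of the space, then forces $H^i_\fm(\widehat{R^+}/p)$ to vanish \emph{genuinely}, not merely almost, in degrees below the top one. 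Since $p$ is a nonzerodivisor on $\widehat{R^+}$ and $\widehat{R^+}$ is $p$-adically complete, this bootstraps to $H^i_\fm(\widehat{R^+})=0$ for $i<d$, and the nontriviality of $\widehat{R^+}/\fm\widehat{R^+}$ is inherited from the almost statement of the first step.

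I expect the second step of the mixed-characteristic case to be the main obstacle. There is no known route within classical commutative algebra for upgrading ``almost Cohen--Macaulay'' to genuinely ``Cohen--Macaulay'' in mixed characteristic; the only available argument passes through the full machinery of prismatic cohomology and the perfectoid Riemann--Hilbert formalism, together with the nontrivial geometric input bounding the $\mathbb{F}_p$-cohomological dimension of the relevant punctured perfectoid space. In equal characteristic this obstacle does not arise, because Frobenius supplies genuine — not merely almost — equations annihilating the obstruction cocycles, which is precisely why the two cases demand entirely different proofs.
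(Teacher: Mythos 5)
The paper does not supply a proof of this theorem; it cites \cite{Bh20}, \cite{HH92}, \cite{HL07} and \cite{Q16} and uses the statement as a black box. Your outline does correctly identify the overall architecture of the proofs in those references: reduce to a complete regular base by Cohen structure theory, handle equal characteristic $p$ by the Hochster--Huneke/Huneke--Lyubeznik equational lemma applied to the colimit of module-finite normal extensions, and in mixed characteristic first establish almost Cohen--Macaulayness via almost purity and then upgrade ``almost'' to ``genuine'' via Bhatt's prismatic Riemann--Hilbert argument combined with Scholze's cohomological-dimension bound for affinoid perfectoid spaces. That is the right shape, and your diagnosis that the upgrading step is the essential new difficulty in mixed characteristic is accurate.

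However, your very first reduction contains a genuine error. You claim that passing from $R$ to $\widehat{R}/\fp$ for a minimal prime $\fp$ of $\widehat{R}$ ``alters neither the hypotheses nor the ring $R^+$ (the total fraction field changes only by a finite extension).'' This is false: although $R \hookrightarrow \widehat{R}/\fp$ is injective, the induced extension $\Frac(R) \hookrightarrow \Frac(\widehat{R}/\fp)$ is typically an infinite, transcendental extension (e.g.\ $R=\mathbb{Z}_p[x]_{(p,x)}$ has $\widehat{R}=\mathbb{Z}_p[[x]]$, and $\mathbb{Q}_p(x)\subset\mathbb{Q}_p((x))$ is transcendental). Consequently $R^+$ and $(\widehat{R}/\fp)^+$ are not the same ring, nor even naturally comparable as subrings of a common field, so you cannot ``assume $R$ is complete'' for free. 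The actual reduction in Bhatt's paper is more delicate: it exploits excellence through the regularity of $R\to\widehat{R}$ to show that $R^+\otimes_R\widehat{R}$ is absolutely integrally closed, and then matches its $p$-adic completion (which equals $\widehat{R^+}$ since $R/p^nR\cong\widehat{R}/p^n\widehat{R}$) against the product of the $\widehat{(\widehat{R}/\fp_i)^+}$ over the minimal primes $\fp_i$ of $\widehat{R}$. Without this step your argument only establishes the theorem for complete local domains, which is strictly weaker than the statement being asserted.
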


This result has the following implication in positive characteristic.

\begin{corollary}
Assume that $(R,\fm,k)$ is a complete Noetherian local domain of mixed characteristic with perfect residue field. Let $p,x_2,\ldots,x_d$ be a system of parameters of $R$. Denote by $(R^+)^\flat$ the tilt of $R^+$ and set $p^\flat:=(p,p^{1/p},\ldots),x_2^\flat:=(x_2,x_2^{1/p},\ldots),\ldots,x_d^\flat:=(x_d,x_d^{1/p},\ldots)$ as elements of $(R^+)^\flat$. Then $p^\flat,x_2^\flat,\ldots,x_d^\flat$ is a regular sequence on $(R^+)^\flat$.
\end{corollary}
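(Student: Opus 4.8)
The plan is to transport the mixed-characteristic regular sequence furnished by Theorem~\ref{BigCM1} across the tilting operation. Set $A:=\widehat{R^+}$; since the tilt is formed after $p$-adic completion, $(R^+)^\flat = A^\flat$. I would first record that $A$ is a $p$-torsion-free integral perfectoid ring: its perfectoidness is well known (the reduction $R^+/pR^+$ is semiperfect because $R^+$ is absolutely integrally closed; see \cite{Bh20}), and it is $p$-torsion-free because if $a = (a_n)_n \in A = \varprojlim_n R^+/p^nR^+$ satisfies $pa = 0$, then any lift $\widetilde{a_n} \in R^+$ has $p\widetilde{a_n} \in p^nR^+$, hence $\widetilde{a_n} \in p^{n-1}R^+$ since $R^+$ is a domain, and the compatibility relations $a_m \equiv a_{m+1} \pmod{p^m}$ then force $a_m \in p^mR^+$ for every $m$, i.e.\ $a = 0$.

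By Theorem~\ref{BigCM1} (applicable because $R$, being complete, is an excellent local domain of residual characteristic $p$), $A$ is a balanced big Cohen--Macaulay $R$-algebra; hence the system of parameters $p, x_2, \ldots, x_d$ of $R$ is a regular sequence on $A$ and $(p, x_2, \ldots, x_d)A \neq A$. In particular $p$ is a nonzerodivisor on $A$, and reducing modulo $p$ (using $A/pA = R^+/pR^+$) the sequence $x_2, \ldots, x_d$ is a regular sequence on $R^+/pR^+$ with $(p, x_2, \ldots, x_d)R^+ \neq R^+$. I then invoke two standard facts about the tilt of the perfectoid ring $A$ (see e.g.\ \cite{Bh20} and the references therein): since $p^\flat = (p, p^{1/p}, \ldots)$ is a pseudo-uniformizer of $A^\flat$ with $(p^\flat)^\sharp = p$ and $A$ is $p$-torsion-free, $p^\flat$ is a nonzerodivisor on $A^\flat$; and the sharp map induces a ring isomorphism $A^\flat/(p^\flat)A^\flat \xrightarrow{\ \sim\ } A/pA = R^+/pR^+$ that carries the class of $f^\flat$ to $f \bmod p$ for every $f \in R^+$ with its chosen compatible system of $p$-power roots, so that in particular $x_i^\flat \mapsto x_i \bmod p$ for $i = 2, \ldots, d$.

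Granting these, the conclusion is formal. First, $p^\flat$ is a nonzerodivisor on $(R^+)^\flat = A^\flat$. Next, fix $2 \le j \le d$; the isomorphism above carries the ideal $(x_2^\flat, \ldots, x_{j-1}^\flat)$ of $A^\flat/(p^\flat)A^\flat$ onto the ideal $(x_2, \ldots, x_{j-1})$ of $R^+/pR^+$, hence induces an isomorphism
\[
A^\flat/(p^\flat, x_2^\flat, \ldots, x_{j-1}^\flat)A^\flat \;\cong\; R^+/(p, x_2, \ldots, x_{j-1})R^+
\]
under which multiplication by $x_j^\flat$ corresponds to multiplication by $x_j$; since the latter is injective by the previous paragraph, $x_j^\flat$ is a nonzerodivisor on $A^\flat/(p^\flat, x_2^\flat, \ldots, x_{j-1}^\flat)A^\flat$. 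Finally $(p^\flat, x_2^\flat, \ldots, x_d^\flat)A^\flat$ is a proper ideal, because the corresponding quotient is $R^+/(p, x_2, \ldots, x_d)R^+ \neq 0$. Therefore $p^\flat, x_2^\flat, \ldots, x_d^\flat$ is a regular sequence on $(R^+)^\flat$.

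The main obstacle is the tilting dictionary used in the second paragraph: one must verify that the isomorphism $A^\flat/(p^\flat)A^\flat \cong A/pA$ is the one induced by the sharp map (equivalently, by the zeroth-component projection $\varprojlim_{\Phi}(A/pA) \to A/pA$), so that it sends each $x_i^\flat$ to $\overline{x_i}$ independently of the choice of compatible $p$-power roots, and that $p^\flat$ is a nonzerodivisor on $A^\flat$ — both of which ultimately rest on $\widehat{R^+}$ being $p$-torsion-free, which is why that point is isolated at the start. Once they are in place, the passage of the regular sequence from the mixed-characteristic ring $\widehat{R^+}$ to the characteristic-$p$ ring $(R^+)^\flat$ is purely formal.
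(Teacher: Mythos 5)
Your proof is correct and follows essentially the same route as the paper's: invoke Theorem~\ref{BigCM1} to get that $p, x_2, \ldots, x_d$ is a regular sequence on $\widehat{R^+}$, then transport it to $(R^+)^\flat$ via the nonzerodivisor property of $p^\flat$ and the isomorphism $(R^+)^\flat/p^\flat(R^+)^\flat \cong R^+/pR^+$. You simply supply more of the tilting-dictionary details (the $p$-torsion-freeness of $\widehat{R^+}$, the identification of $x_i^\flat$ with $x_i \bmod p$) that the paper attributes to ``the basic theory of tilts.''
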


\begin{proof}
From the basic theory of tilts, we have two exact sequences
$$
0 \to \widehat{R^+} \xrightarrow{p} \widehat{R^+} \to R^+/pR^+ \to 0~\mbox{and}~0 \to (R^+)^\flat \xrightarrow{p^\flat} (R^+)^\flat \to R^+/pR^+ \to 0,
$$
together with an isomorphism $R^+/pR^+ \cong (R^+)^\flat/p^\flat (R^+)^\flat$. Now the claim of the corollary follows from Theorem \ref{BigCM1}.
\end{proof}

Bhatt's strategy for proving Theorem \ref{BigCM1} is to reduce to the corresponding problem in positive characteristic. Namely, he studied the Frobenius structure on the local cohomology module of the perfect $\mathbb{F}_p$-algebra $(R^+)^\flat$. A heuristic observation tells us that it is relatively easier to study (almost) Cohen-Macaulay or other homological properties for the perfect ring $(R^+)^\flat$ than to study the perfectoid ring $\widehat{R^+}$ in mixed characteristic. However, it is not clear what to expect on the intrinsic structure of $\widehat{R^+}$ or $(R^+)^\flat$ other than the Cohen-Macaulay property as proved by Bhatt, due to the fact that these are highly non-Noetherian rings. Indeed, some evidence is given in \cite{G24} that such rings could be be of quite huge (infinite) Krull dimension. Another interesting question is the coherence of these big rings, which is explored in \cite{Patankar1} and \cite{Patankar2}. We obviously have inclusions $R_\infty \subseteq R_{\infty,p} \subseteq R^+$ and there is a nice description of $\widehat{R_\infty}$ or $(R_\infty)^\flat$ thanks to Proposition \ref{basictilting} and Proposition \ref{valuationtilting}. We remark that Proposition \ref{valuationtilting} was already proved by Andreatta in \cite{An06}. Here, we give a new proof using big Cohen-Macaulay algebras constructed by Gabber and Ramero. In any case, it is desirable to find an alternative approach to Theorem \ref{BigCM1} by studying the $p$-adically completed extensions $\widehat{R_\infty} \subseteq \widehat{R_{\infty,p}} \subseteq \widehat{R^+}$ or the tilted extensions $(R_\infty)^\flat \subseteq (R_{\infty,p})^\flat \subseteq (R^+)^\flat$. Therefore, we are naturally led to the following problem.

\begin{Problem}
\label{tiltedRing}
Let $A \to B$ be an integral extension of $p$-torsion free rings such that the $p$-adic completions of $A$ and $B$ are perfectoid. Then is the $p$-adic completion $\widehat{A} \to \widehat{B}$ still close to being integral? More specifically, can $\widehat{R^+}$ or $\widehat{R_{\infty,p}}$ be constructed from a union of distinguished integral extensions of $\widehat{R_{\infty}}$ up to $p$-adic completion? One can ask the similar question for the tilts of these rings.
\end{Problem}

We will not be precise on what distinguished integral extensions mean in the present article. We will prove some results concerning $(R_{\infty,p})^\flat$ and $\widehat{R_{\infty,p}}$ in Theorem \ref{Galoistilting} and Corollary \ref{Galoistilting2}, respectively. Somewhat surprisingly, although the tilting or the $p$-adic completion of an integral extension is not necessarily integral, it will be shown that $(R_\infty)^\flat \to (R_{\infty,p})^\flat$ or $\widehat{R_\infty} \to \widehat{R_{\infty,p}}$ is integral up to completion. We think this is a key step toward gaining a good understanding of $(R_{\infty,p})^\flat$ or $\widehat{R_{\infty,p}}$. Another fruitful idea is perhaps to understand $R_{\infty,p}$ via its tilt using the Riemann-Hilbert correspondence in positive characteristic \cite{BL19}. There is still a huge gap between $R_{\infty,p}$ and $R^+$. We hope to study a finer structure of big rings such as $(R^+)^\flat$ or $\widehat{R^+}$ in a future's occasion.

\begin{acknowledgement}
The authors are grateful to S. Patankar for pointing out errors in an earlier version of this paper, and to the referee for his/her detailed comments and suggestions which greatly improved the presentation of the article.
\end{acknowledgement}

\section{Preliminaries and notation}

To prove the main theorem, we make some preparations. Let us recall the notion of almost integral closure in the form which suffices for our purpose.

\begin{definition}
Let $A$ be a ring with a regular element $t \in A$. An element $x \in A[1/t]$ is \textit{almost integral} over $A$ if there is a fixed $m \in \mathbb{N}$ such that $t^m x^n \in A$ for all $n>0$. The set of elements $x \in A[1/t]$ that are almost integral over $A$ is called the \textit{complete integral closure} of $A$ in $A[1/t]$. We denote this by $A^*_{A[1/t]}$.
\end{definition}

One can show that $A^*_{A[1/t]}$ is indeed a subring of $A[1/t]$, which coincides with the integral closure of $A$ in $A[1/t]$ in the case that $A$ is Noetherian. We will make an essential use of complete integral closure. Fix a \textit{basic setup} $(V,I)$. In other words, $V$ is a commutative ring with an ideal $I \subseteq A$ such that $I=I^2$. See \cite{GR03} for details. We recall the definition of almost \'etale ring maps.

\begin{definition}
Let $(V,I)$ be a basic setup. Then a $V$-algebra map $f:A \to B$ is \textit{$I$-almost finite \'etale} if $B$ is an $I$-almost finitely generated, $I$-almost projective $A$-module, and $B$ is an $I$-almost projective $B \otimes_A B$-module with respect to the diagonal map $\mu:B \otimes_A B \to B$.
\end{definition}

We will need to use almost Galois coverings for which we refer to \cite[\S 1.9]{An18}.

\begin{definition}[Almost $G$-Galois cover]
\label{AlmostGaloisCov}
Let $(V,I)$ be a basic setup and let $f:A \to B$ be a $V$-algebra map. Assume that a finite group $G$ acts $A$-linearly on $B$. We say that $A \to B$ is an \textit{$I$-almost G-Galois cover} if the following conditions are satisfied.
\begin{enumerate}
\item
Let $B^G=\{b \in~|~g(b)=b;~\forall g \in G\}$. Then the induced map $A \to B^G$ is an $I$-almost isomorphism.

\item
The natural map
$$
B \otimes_A B \to \prod_{g \in G} B;~b_1 \otimes b_2 \mapsto (g(b_1)b_2)_{g \in G}
$$
is an $I$-almost isomorphism.
\end{enumerate}
\end{definition}

In \cite[Definition V.12.2]{AGT16}, there is a definition of almost Galois coverings that looks different from Definition \ref{AlmostGaloisCov}. These definitions are equivalent to each other. Indeed, to show that \cite[Definition V.12.2]{AGT16} means Definition \ref{AlmostGaloisCov}, it is sufficient to apply \cite[Lemma V.12.4 and Lemma V.12.6]{AGT16}. To deduce the converse, it suffices to note that $f:A \to B$ is $I$-almost faithfully flat. This follows from \cite[Proposition 1.9.1 (1)]{An18} and the fact that an $I$-almost \'etale map is $I$-almost faithfully flat (see \cite[1.8.1. Remarques (1)]{An18}).

\begin{proposition}
\label{AlmostGaloisVanish}
Assume that a $V$-algebra map $A \to B$ is an $I$-almost $G$-Galois covering. Then $H^i(G,M)$ is $I$-almost zero for all $i>0$ and any $B$-module $M$ with a semi-linear $G$-action.
\end{proposition}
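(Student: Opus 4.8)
The plan is to reduce everything to a single element $\varepsilon\in I$ at a time: since a module is $I$-almost zero exactly when it is annihilated by every $\varepsilon\in I$, it suffices to show $\varepsilon\cdot H^i(G,M)=0$ for $i>0$ for each such $\varepsilon$. First I would exploit condition (2) of Definition \ref{AlmostGaloisCov}: the map $\tau\colon B\otimes_AB\to\prod_{g\in G}B$ has $I$-almost zero cokernel, so for any $\varepsilon\in I$ the element $\varepsilon\cdot e$, where $e=(1,0,\dots,0)\in\prod_{g\in G}B$ is the idempotent supported at $g=1$, lies in the image of $\tau$. Choosing a preimage $\sum_{i=1}^{r}x_i\otimes y_i$ produces elements $x_i,y_i\in B$ with $\sum_i g(x_i)\,y_i=\varepsilon\,\delta_{g,1}$ for all $g\in G$; applying each $g$ (which fixes the image of $\varepsilon\in V$, as $G$ acts by $A$-algebra automorphisms) also gives $\sum_i x_i\,g(y_i)=\varepsilon\,\delta_{g,1}$. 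These are an ``$\varepsilon$-almost dual basis'': they trivialize the descent datum of $A\to B$ up to the factor $\varepsilon$.

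Next I would use this dual basis to build an $\varepsilon$-contracting homotopy on the standard complex $C^{\bullet}(G,M)$ computing $H^{\bullet}(G,M)$, i.e. $\mathbb{Z}$-linear maps $s^{n}\colon C^{n}(G,M)\to C^{n-1}(G,M)$ for $n\ge 1$ with $d^{n-1}s^{n}+s^{n+1}d^{n}=\varepsilon\cdot\id$ in all degrees $\ge 1$. The conceptual picture is that the isomorphism of Definition \ref{AlmostGaloisCov}(2), together with the fact that $\prod_{g\in G}B\cong B^{|G|}$ is finite free over $B$, identifies $C^{\bullet}(G,M)$ degreewise and compatibly with differentials, up to $I$-almost isomorphism, with the Amitsur--{\v C}ech complex $M\to M\otimes_AB\to M\otimes_AB^{\otimes_A2}\to\cdots$ of the extension $A\to B$; on the latter the usual cone-point contracting homotopy becomes available once one has a section of $A\to B$ after base change, and the role of $x_i,y_i$ is exactly to supply such a section ``up to $\varepsilon$'' and to render the homotopy compatible with the $G$-structure. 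Equivalently, one may note that an $I$-almost $G$-Galois cover is an $I$-almost finite \'etale $G$-torsor and invoke the almost Galois descent of \cite{AGT16} directly: $M$ is $I$-almost isomorphic to $M^{G}\otimes_AB$, the $A[G]$-module $B$ is $I$-almost projective because $B\otimes_AB\cong B[G]$ is $B[G]$-free of rank one and one descends along $A\to B$, and hence $H^{i}(G,B)$, and therefore $H^{i}(G,M^{G}\otimes_AB)$, is $I$-almost zero for $i>0$. Either way, $\varepsilon\cdot H^{i}(G,M)=0$ for every $\varepsilon\in I$ and $i>0$, so $H^{i}(G,M)$ is $I$-almost zero; note that the homotopy uses only condition (2) of Definition \ref{AlmostGaloisCov}, condition (1) being about $H^{0}$ and descent of $M$.

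I expect the main obstacle to be making the contracting homotopy genuinely well-defined on the $G$-invariant subcomplex, equivalently genuinely compatible with the Amitsur reformulation. The naive cone-point homotopy destroys the $G$-symmetry, and restoring it forces one to insert the elements $x_i,y_i$ with the correct $G$-twists and then to verify the identity $ds+sd=\varepsilon\cdot\id$ by a bookkeeping computation in which the semilinearity $g(bm)=g(b)\,g(m)$ must be tracked at each step. A secondary point to be careful about is the mere existence of the $x_i,y_i$, which is precisely where $I$-almost surjectivity of the map in Definition \ref{AlmostGaloisCov}(2) is used; its kernel is not needed here.
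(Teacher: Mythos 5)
The paper's own ``proof'' of Proposition~\ref{AlmostGaloisVanish} is a one-line citation to \cite[Proposition V.12.8]{AGT16}, so your proposal supplies an argument where the paper supplies a pointer. Your argument is along standard lines and is correct in outline. The first route is the classical trace/dual-basis contracting homotopy: from the almost surjectivity of $\tau\colon B\otimes_AB\to\prod_{g\in G}B$ one extracts, for each $\varepsilon\in I$, elements $x_i,y_i\in B$ with $\sum_ig(x_i)y_i=\varepsilon\delta_{g,1}$ (and, as you note, by $A$-linearity of $G$ and relabelling $g\mapsto g^{-1}$, also $\sum_ix_ig(y_i)=\varepsilon\delta_{g,1}$), and one feeds these into the usual homotopy on the bar complex so that $ds+sd=\varepsilon\cdot\id$ in positive degrees; the only caveat, which you correctly flag, is that the raw formula leaves degenerate boundary terms such as $f(g_0,\dots,g_{n-1},g_{n-1})$, so one should work with normalized cochains (or adjust the formula) to make the identity literal. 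The second route -- almost faithfully flat/Galois descent giving $M\approx M^G\otimes_AB$, the observation that $B\otimes_AB$ is ($I$-almost) free of rank one over $B[G]$ and hence $B$ is almost finitely generated projective over $A[G]$, and Shapiro's lemma for $N\otimes_AA[G]$ -- is equally valid and is closer in spirit to what \cite{AGT16} (and \cite{GR03}) actually prove; note that this second route genuinely uses condition~(1) of Definition~\ref{AlmostGaloisCov} (via $B^G\approx A$ and the descent $M\approx M^G\otimes_AB$), so the parenthetical remark that only condition~(2) is needed applies to the homotopy route only. What your proposal buys over the paper's treatment is a self-contained proof; what the paper's citation buys is brevity and a reference where the normalization/descent subtleties are carried out in full.
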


\begin{proof}
See \cite[Proposition V.12.8]{AGT16} for the proof.
\end{proof}

We need the following exact sequence of Milnor's type.

\begin{proposition}
\label{MilnorExact}
Let $G$ be a finite group and let $\{\cdots \to M_2 \to M_1\}$ be an inverse system of  $\mathbb{Z}[G]$-modules such that $\mathbf{R}^1\varprojlim_{i \in \mathbb{N}}M_i=0$. Then for any $n \in \mathbb{Z}$, there is a short exact sequence
$$
0 \to \mathbf{R}^1\varprojlim_{i \in \mathbb{N}}\Ext^{n-1}_{\mathbb{Z}[G]}(\mathbb{Z},M_i) \to \Ext^{n}_{\mathbb{Z}[G]}(\mathbb{Z},\varprojlim_{i \in \mathbb{N}}M_i) \to \varprojlim_{i \in \mathbb{N}}\Ext^{n}_{\mathbb{Z}[G]}(\mathbb{Z},M_i) \to 0.
$$
Moreover, the same conclusion holds in the category of almost $V$-modules with respect to a basic setup $(V,I)$.
\end{proposition}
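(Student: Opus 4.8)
The plan is to compute $\Ext^\bullet_{\mathbb{Z}[G]}(\mathbb{Z},-)$ and the derived inverse limit on one and the same complex, and then to invoke the classical telescope exact sequence (of Milnor's type) relating the cohomology of an inverse limit of cochain complexes to the inverse limit of the cohomologies.

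First I would fix a resolution $P_\bullet\to\mathbb{Z}$ of the trivial $\mathbb{Z}[G]$-module $\mathbb{Z}$ by free $\mathbb{Z}[G]$-modules; since $G$ is finite, $\mathbb{Z}[G]$ is Noetherian and $\mathbb{Z}$ is finitely generated over it, so each $P_n$ may be taken finitely generated free, say $P_n\cong\mathbb{Z}[G]^{\oplus r_n}$. Applying $\Hom_{\mathbb{Z}[G]}(P_\bullet,-)$ to the inverse system $\{M_i\}$ yields an inverse system of cochain complexes $C_i^\bullet:=\Hom_{\mathbb{Z}[G]}(P_\bullet,M_i)$ with $H^n(C_i^\bullet)=\Ext^n_{\mathbb{Z}[G]}(\mathbb{Z},M_i)$. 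Since $\Hom_{\mathbb{Z}[G]}(P_n,-)$ commutes with inverse limits, one has $\varprojlim_i C_i^\bullet=\Hom_{\mathbb{Z}[G]}(P_\bullet,\varprojlim_i M_i)$, and hence $H^n(\varprojlim_i C_i^\bullet)=\Ext^n_{\mathbb{Z}[G]}(\mathbb{Z},\varprojlim_i M_i)$.

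Next I would observe that the system $\{C_i^\bullet\}_i$ is degreewise $\mathbf{R}^1\varprojlim$-acyclic: for each $n$ we have $C_i^n\cong M_i^{\oplus r_n}$, so $\mathbf{R}^1\varprojlim_i C_i^n\cong(\mathbf{R}^1\varprojlim_i M_i)^{\oplus r_n}=0$ by hypothesis. (Finiteness of $r_n$ is a convenience only: $\mathbf{R}^1\varprojlim$ commutes with arbitrary products, so any free resolution would serve.) Granting this, the telescope sequence $0\to\varprojlim_i C_i^\bullet\to\prod_i C_i^\bullet\xrightarrow{\,1-\mathrm{shift}\,}\prod_i C_i^\bullet\to 0$ is a short exact sequence of cochain complexes. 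Passing to the associated long exact cohomology sequence, using that arbitrary products are exact (so $H^n(\prod_i C_i^\bullet)=\prod_i H^n(C_i^\bullet)$) and that the kernel and cokernel of $1-\mathrm{shift}$ on $\prod_i H^n(C_i^\bullet)$ are $\varprojlim_i H^n(C_i^\bullet)$ and $\mathbf{R}^1\varprojlim_i H^n(C_i^\bullet)$ respectively, one extracts for each $n$ a short exact sequence
\[
0\to\mathbf{R}^1\varprojlim_i H^{n-1}(C_i^\bullet)\to H^n(\varprojlim_i C_i^\bullet)\to\varprojlim_i H^n(C_i^\bullet)\to 0,
\]
which is the assertion once the identifications of the previous paragraph are inserted.

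For the statement about almost $V$-modules, I would note that every step above is formal. The category of $G$-equivariant almost $V$-modules is a Grothendieck category in which arbitrary products are exact; group cohomology $H^n(G,-)\cong\Ext^n_{\mathbb{Z}[G]}(\mathbb{Z},-)$ of such objects is computed by the very same complex $\Hom_{\mathbb{Z}[G]}(P_\bullet,-)$; $\Hom$ still commutes with limits; and $\mathbf{R}^1\varprojlim_{i\in\mathbb{N}}$ is still the only nonvanishing higher derived limit. Hence the argument carries over verbatim. I expect the only delicate point to be exactly this transfer — one must invoke from \cite{GR03} that $\Ext^\bullet_{\mathbb{Z}[G]}(\mathbb{Z},-)$ and $\mathbf{R}^\bullet\varprojlim$ in the almost category really are computed by these complexes and that countable products remain exact there; the honest-module case itself is routine homological bookkeeping.
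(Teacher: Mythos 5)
Your proof is correct and takes essentially the same route as the paper: the paper applies $\Ext^\bullet_{\mathbb{Z}[G]}(\mathbb{Z},-)$ to the Milnor short exact sequence $0\to\varprojlim M_i\to\prod M_i\to\prod M_i\to 0$ and reads off the conclusion from the resulting long exact sequence, while you run the same telescope construction one level up, at the cochain-complex level $\Hom_{\mathbb{Z}[G]}(P_\bullet,-)$. The underlying ingredients are identical — the vanishing of $\mathbf{R}^1\varprojlim M_i$, the telescope sequence, exactness of products, and the fact that $\Ext^n_{\mathbb{Z}[G]}(\mathbb{Z},-)$ commutes with products because $\mathbb{Z}$ has a finitely generated free $\mathbb{Z}[G]$-resolution — and you merely make the last point explicit where the paper leaves it implicit.
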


\begin{proof}
Since ${\bf R}^1\varprojlim_{i \in \mathbb{N}} M_i=0$ by assumption, we get the short exact sequence $0 \to \varprojlim_{i \in \mathbb{N}} M_i \to \prod_{i=1}^\infty M_i \to \prod_{i=1}^\infty M_i \to 0$ in view of the definition of derived limits. This induces a long exact sequence
$$
\cdots \to \Ext_{\mathbb{Z}[G]}^n(\mathbb{Z},\varprojlim_{i \in \mathbb{N}} M_i) \to \prod_{i=1}^\infty \Ext_{\mathbb{Z}[G]}^n(\mathbb{Z},M_i) \to \prod_{i=1}^\infty \Ext_{\mathbb{Z}[G]}^n(\mathbb{Z},M_i)
$$
$$
\to \Ext_{\mathbb{Z}[G]}^{n+1}(\mathbb{Z},\varprojlim_{i \in \mathbb{N}} M_i) \to \cdots. 
$$
By the definition of derived limits, we obtain the desired exact sequence. The almost analogue is proved in the same manner, where $\{\cdots \to M_2 \to M_1\}$ is an inverse system of $V[G]$-modules under the assumption that  $\mathbf{R}^1\varprojlim_{i \in \mathbb{N}}M_i$ is $I$-almost zero.
\end{proof}

The tilt of commutative rings will be important for our study.

\begin{definition}
\label{tiltingring}
Let $A$ be a ring and let $p>0$ be a prime. Then we define the \textit{tilt} to be
$$
A^\flat:=\varinjlim \{\cdots \xrightarrow{F} A/pA \xrightarrow{F} A/pA\},
$$
where $F$ is the Frobenius endomorphism on $A/pA$. Let $\Phi^k_A:A^\flat \to A/pA$ be the projection map defined by $(\ldots,a_2,a_1) \in A^\flat \mapsto a_k \in A/pA$.
\end{definition}

We will exclusively consider the case when $A \ne pA$. Notice that $A^\flat$ is naturally a commutative ring containing $\mathbb{F}_p$. The map $\Phi^k_A$ is a ring map. Moreover, it is easy to check that $A^\flat$ is a perfect $\mathbb{F}_p$-algebra.

\begin{lemma}[Topological almost Nakayama's lemma]
\label{TopNakayama}
Let $\varpi \in V$ be a regular element with a system of $p$-power elements $\{\varpi^{1/p^n}\}_{n \in \mathbb{N}}$ in $V$ and thus, defining a basic setup. Let $M \to N$ be a map of $V$-modules. If $N$ is $\varpi$-adically separated and the induced map $M/\varpi M \to N/\varpi N$ is $(\varpi)^{1/p^\infty}$-almost surjective, then $M \to N$ is $(\varpi)^{1/p^\infty}$-almost surjective.
\end{lemma}

\begin{proof}
The proof given in \cite[Proposition 3.3]{IS22} works with minor modifications, so we omit the proof.
\end{proof}

\section{Main theorems}

Let $R=W(k)[[x_2,\ldots,x_d]]$ be a complete regular local ring with perfect residue field $k$ of characteristic $p>0$, where $W(k)$ is the ring of Witt vectors. Let $R^+$ be an absolute integral closure and define
$$
R_\infty:=\bigcup_{n>0}R[p^{1/p^n},x_2^{1/p^n},\ldots,x_d^{1/p^n}]
$$
as a subring of $R^+$. As shown in \cite[Proposition 4.9]{Shi16}, the $p$-adic completion $\widehat{R_\infty}$ is an integral perfectoid ring. Moreover, it is easy to see that $R \to R_\infty$ is faithfully flat. Then we apply \cite[Theorem 0.1]{Ye18} to conclude that
\begin{equation}
\label{BigMac12345}
\widehat{R_\infty}~\mbox{is an integral perfectoid balanced big Cohen-Macaulay}~R\mbox{-algebra}.
\end{equation}
We have integral extensions:
$$
R \hookrightarrow R_\infty \hookrightarrow R^+,
$$
where $R^+$ is the absolute integral closure of $R$. Recently, Bhatt \cite{Bh20} proved that $\widehat{R^+}$ is a balanced big Cohen-Macaulay $R$-algebra. By tilting, we have ring extensions:
\begin{equation}
\label{basicextension1}
k[[p^\flat,x_2^\flat,\ldots,x_d^\flat]] \hookrightarrow (R_\infty)^\flat \hookrightarrow (R^+)^\flat.
\end{equation}
Bhatt's result asserts that $p^\flat,x_2^\flat,\ldots,x_d^\flat$ is a regular sequence on $(R^+)^\flat$. On the other hand, Heitmann \cite{Hei22} proved that $(R^+)^\flat$ is an integral domain. However, we still do not much understand $(R^+)^\flat$ as an $(R_\infty)^\flat$-algebra. For instance, it is certainly not true that this is an integral extension. The reason is that even though $R_\infty$ is already non-Noetherian, the integral extension $R_\infty \to R^+$ is far from being finitely generated. We will see that $(R_\infty)^\flat$ is the $p^\flat$-adic completion of the perfect closure of $k[[p^\flat,x_2^\flat,\ldots,x_d^\flat]]$. Thus, $(R_\infty)^\flat$ is relatively easy to understand. Let us introduce another intermediate ring $R_\infty \to R_{\infty,p} \to R^+$ as follows.

\begin{definition}
\label{p-maximaletale}
Let $R_{\infty,p}$ be a unique maximal integral extension over $R_\infty$ such that $R_{\infty,p}$ is a subring of $R^+$ and the localization $R_\infty[1/p] \to R_{\infty,p}[1/p]$ is a filtered colimit of finite \'etale $R_\infty[1/p]$-algebras contained in $R^+[1/p]$. 
\end{definition}

The ring $R_{\infty,p}$ was originally introduced and studied in Faltings' groundbreaking work on $p$-adic Hodge theory (see \cite{AGT16} for connections with $p$-adic Simpson correspondence and \cite{Fa02} for $p$-adic Hodge theory). Intuitively, the extension $R_\infty \to R_{\infty,p}$ is large, but $R_{\infty,p} \to R^+$ is substantially large. The following result has long been known.

\begin{theorem}
\label{AlmostPure1}
Let the notation be as above. Then $R_{\infty,p}$ is a $(p)^\infty$-almost Cohen-Macaulay integrally closed domain in the sense that $p,x_2,\ldots,x_d$ is a $(p)^\infty$-almost regular sequence. Moreover, the $p$-adic completion of $R_{\infty,p}$ is a $p$-torsion free integral perfectoid ring.
\end{theorem}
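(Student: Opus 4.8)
The plan is to reduce every assertion to the $p$-adically completed setting and then feed the hypothesis of Definition~\ref{p-maximaletale} into the perfectoid form of the almost purity theorem (compare \cite{An06}, \cite{Shi16}, \cite{GR03}, \cite{AGT16}). As a first step, each finite stage $R_n:=R[p^{1/p^n},x_2^{1/p^n},\ldots,x_d^{1/p^n}]$ of $R_\infty$ is obtained from $R$ by successively adjoining a $p^n$-th root of each member of the regular system of parameters $p,x_2,\ldots,x_d$; hence $R_n$ is a regular local ring and a finite free $R$-module, so $R_\infty=\bigcup_n R_n$ is $R$-flat and a normal domain. In particular $p,x_2,\ldots,x_d$ is a regular sequence on $R_\infty$ and $x_2,\ldots,x_d$ is a regular sequence on $R_\infty/pR_\infty$; since $\widehat{R_\infty}$ is $p$-torsion free (it is integral perfectoid) and $\widehat{R_\infty}/p\widehat{R_\infty}\cong R_\infty/pR_\infty$, the sequence $p,x_2,\ldots,x_d$ is then regular on $\widehat{R_\infty}$ as well. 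Throughout, almost mathematics is taken with respect to the element $p$, i.e.\ the ideal $(p)^{1/p^\infty}$ generated by the $p$-power roots of $p$, which is the basic setup written $(p)^\infty$ in the statement.

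Next, write $R_{\infty,p}[1/p]=\colim_j A_j$, where each $A_j$ is a finite \'etale $R_\infty[1/p]$-subalgebra of $R^+[1/p]$, and let $B_j$ denote the integral closure of $R_\infty$ in $A_j$, so that $R_{\infty,p}=\colim_j B_j$. Applying the almost purity theorem to the integral perfectoid ring $\widehat{R_\infty}$ and the finite \'etale covers $A_j$ of $\widehat{R_\infty}[1/p]$, one gets that each $\widehat{B_j}$ is a $(p)^{1/p^\infty}$-almost finite \'etale, hence $(p)^{1/p^\infty}$-almost projective, $\widehat{R_\infty}$-algebra, and that the $p$-complete filtered colimit $\widehat{R_{\infty,p}}$ is again a $p$-torsion free integral perfectoid ring. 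This already proves the last sentence of the theorem.

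For the almost Cohen--Macaulay assertion, recall that an almost projective module is almost flat, so $\widehat{B_j}/p\widehat{B_j}$ is $(p)^{1/p^\infty}$-almost flat over $\widehat{R_\infty}/p\widehat{R_\infty}\cong R_\infty/pR_\infty$; therefore $x_2,\ldots,x_d$, which is a regular sequence on $R_\infty/pR_\infty$, is a $(p)^{1/p^\infty}$-almost regular sequence on $\widehat{B_j}/p\widehat{B_j}$, and, passing to the filtered colimit, on $R_{\infty,p}/pR_{\infty,p}\cong\widehat{R_{\infty,p}}/p\widehat{R_{\infty,p}}$. Since $p$ belongs to the sequence, the relevant Koszul homology is $p$-torsion and is controlled by the reduction modulo $p$ (which is unaffected by $p$-adic completion), so together with the $p$-torsion freeness of $R_{\infty,p}$ and of $\widehat{R_{\infty,p}}$ this upgrades to: $p,x_2,\ldots,x_d$ is a $(p)^{1/p^\infty}$-almost regular sequence on $R_{\infty,p}$ and on $\widehat{R_{\infty,p}}$. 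Running the same reduction through the two tilting short exact sequences
\[
0\to\widehat{R_{\infty,p}}\xrightarrow{p}\widehat{R_{\infty,p}}\to R_{\infty,p}/pR_{\infty,p}\to 0,\qquad
0\to (R_{\infty,p})^\flat\xrightarrow{p^\flat}(R_{\infty,p})^\flat\to R_{\infty,p}/pR_{\infty,p}\to 0,
\]
together with the identification $(R_{\infty,p})^\flat/p^\flat(R_{\infty,p})^\flat\cong R_{\infty,p}/pR_{\infty,p}$ sending $x_i^\flat$ to $x_i$, then yields that $p^\flat,x_2^\flat,\ldots,x_d^\flat$ is a $(p)^{1/p^\infty}$-almost regular sequence on $(R_{\infty,p})^\flat$. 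Finally, $R_{\infty,p}$ is integrally closed in its field of fractions: by the maximality in Definition~\ref{p-maximaletale} (and since $R^+$ is integrally closed in $R^+[1/p]$), $R_{\infty,p}$ equals the integral closure of $R_\infty$ in $R_{\infty,p}[1/p]=\colim_j A_j$, which is a normal domain as a filtered colimit of the normal domains $A_j$ (each \'etale over the normal domain $R_\infty[1/p]$, and a domain as a subring of the field $R^+[1/p]$); and the integral closure of a subring inside a normal domain is again a normal domain.

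The main obstacle is the deployment of almost purity in the second step. Because the base is the unramified complete regular local ring $W(k)[[x_2,\ldots,x_d]]$ rather than the Laurent-polynomial-type rings of Faltings and Andreatta, one cannot invoke the classical statement verbatim but must use its perfectoid incarnation for $\widehat{R_\infty}$, confirm that the completed colimit produced there coincides with the $p$-adic completion of the colimit $\colim_j B_j$ defining $R_{\infty,p}$, and keep track of the fact that the $(p)^{1/p^\infty}$-almost structure is compatible with filtered colimits and with passage between $R_{\infty,p}$ and $\widehat{R_{\infty,p}}$. The remaining ingredients --- the flatness and normality of the finite stages, the Koszul bookkeeping, and the transfer to the tilt --- are routine.
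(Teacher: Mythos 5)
The paper offers no internal proof of this theorem --- the body of the proof is the single citation ``See \cite{Shi16} for the proof'' --- so there is no in-text argument to compare against. In \cite{Shi16} the almost purity theorem used is the Witt-perfect version of Davis and Kedlaya, which applies directly to the uncompleted ring $R_\infty$; the almost finite \'etale (hence almost flat) structure of each $B_j$ over $R_\infty$ is obtained without ever completing, and the almost Cohen--Macaulay and perfectoid-completion assertions then follow essentially as you describe.

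Your proposal takes a genuinely different route: complete first, apply the Scholze/Kedlaya--Liu perfectoid almost purity theorem over $\widehat{R_\infty}$, then transfer back. That strategy can work, but it contains one real gap, which you yourself flag without resolving: you must confirm that the almost finite \'etale $\widehat{R_\infty}$-algebra produced by perfectoid almost purity from the finite \'etale cover $A_j\otimes_{R_\infty[1/p]}\widehat{R_\infty}[1/p]$ coincides with $\widehat{B_j}$, the $p$-adic completion of the integral closure $B_j$ of $R_\infty$ in $A_j$. This decompletion step is not a bookkeeping check; it is exactly what the Witt-perfect almost purity theorem and the \'etale decompletion equivalence of \cite{NS18} (Theorem 5.9 and Corollary 5.12 there, both invoked in this paper's proof of Theorem \ref{Galoistilting} for precisely this purpose) are designed to supply, and it is the reason the authors define $R_{\infty,p}$ via integral extensions of the uncompleted $R_\infty$ rather than of $\widehat{R_\infty}$. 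Without it, your identification of $\widehat{B_j}$ with the perfectoid almost purity output, and hence the almost-flatness Koszul argument built on it, has no anchor. The remaining ingredients --- regularity of $p,x_2,\ldots,x_d$ on $R_\infty=\bigcup_n R_n$, normality of $R_{\infty,p}$ as a filtered union of normal domains, $p$-torsion freeness of $\widehat{R_{\infty,p}}$ inherited from $R_{\infty,p}$, the Koszul reduction modulo $p$, and the transfer to $(R_{\infty,p})^\flat$ via the two tilting short exact sequences --- are sound as presented.
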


\begin{proof}
We prove that the stated properties hold on $R_{\infty,p}$. Write $R_{\infty,p}[1/p]$ as a filtered direct system of finite \'etale $R_{\infty}[1/p]$-algebras $\{T'_\lambda\}_{\lambda \in \Lambda}$ such that $T'_\lambda \hookrightarrow R_{\infty,p}[1/p]$. Let $T_\lambda$ be the integral closure of $R_\infty$ in $T'_\lambda$. Then it follows that $R_\infty \to T_\lambda$ becomes finite \'etale after inverting $p$ and $R_{\infty,p}=\varinjlim_{\lambda \in \Lambda} T_\lambda$. Then by Witt-perfect almost purity theorem \cite[Theorem 5.9]{NS18}, $R_\infty \to T_\lambda$ is $(p)^\infty$-almost finite \'etale. Thus, we get that $p,x_2,\ldots,x_d$ forms a $(p)^\infty$-almost regular sequence on $T_\lambda$ because $R_\infty$ is a balanced big Cohen-Macaulay $R$-algebra. The same property remains to hold on $R_{\infty,p}=\varinjlim_{\lambda \in \Lambda} T_\lambda$. Finally, it remains to check that $R_{\infty,p}/\fm R_{\infty,p}$ is not $(p)^\infty$-almost zero. But this follows from \cite[Lemma 6.3]{NS23}.
\end{proof}

Now we have a tower of ring extensions which extends $(\ref{basicextension1})$: 
\begin{equation}
\label{basicextension2}
k[[p^\flat,x_2^\flat,\ldots,x_d^\flat]] \hookrightarrow (R_{\infty})^\flat \hookrightarrow (R_{\infty,p})^\flat \hookrightarrow (R^+)^\flat.
\end{equation}

Our main purpose in the rest of the article is to study the tower 
$(\ref{basicextension2})$, which is the main motivation of Problem \ref{tiltedRing}. Although our attempt to access $(R^+)^\flat$ via a certain intermediate ring has not yet been successful, it would be a good idea to access $(R_{\infty,p})^\flat$ in terms of the reasonably small ring $(R_\infty)^\flat$. Our first goal it to prove that $(R_\infty)^\flat$ is close to being Noetherian.

\begin{proposition}
\label{basictilting}
$(R_{\infty})^\flat$ is equal to the $p^\flat$-adic completion of the directed perfection of the power-series ring $k[[p^\flat,x_2^\flat,\ldots,x_d^\flat]]$.
\end{proposition}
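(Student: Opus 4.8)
The plan is to unwind both sides of the claimed equality through the definition of the tilt as the inverse limit $A^\flat \cong \varprojlim_{F} A/pA$ (equivalently $\varprojlim_{x \mapsto x^p} A$ when $A$ is $p$-adically complete), and to exploit the explicit generators of $R_\infty$ over $R$. First I would record the standard fact that for a $p$-adically complete ring $A$ with $A \ne pA$, one has $A^\flat \cong \varprojlim (A \xleftarrow{x\mapsto x^p} A \xleftarrow{x\mapsto x^p}\cdots)$, so that $(R_\infty)^\flat = \widehat{R_\infty}^\flat$ depends only on the $p$-adic completion; this is where the hypothesis that $\widehat{R_\infty}$ is integral perfectoid gets used, since it guarantees the tilt is a perfect ring and is well-behaved. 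Set $S := k[[p^\flat, x_2^\flat,\ldots,x_d^\flat]]$, let $S_{\perf}$ denote its directed perfection $\colim(S \xrightarrow{F} S \xrightarrow{F}\cdots)$, and let $\widehat{S_{\perf}}$ be the $p^\flat$-adic completion. The goal is an isomorphism $\widehat{S_{\perf}} \xrightarrow{\ \sim\ } (R_\infty)^\flat$.

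The key construction is the comparison map. I would first build a ring map $S \to (R_\infty)^\flat$ sending $p^\flat \mapsto (p, p^{1/p}, p^{1/p^2},\ldots)$ and $x_i^\flat \mapsto (x_i, x_i^{1/p},\ldots)$, which makes sense precisely because $R_\infty$ contains all $p$-power roots of $p$ and of the $x_i$ by construction. Since $(R_\infty)^\flat$ is a perfect $\mathbb{F}_p$-algebra, this extends uniquely to $S_{\perf} \to (R_\infty)^\flat$, and since $(R_\infty)^\flat$ is $p^\flat$-adically complete (its modulo-$p^\flat$ reduction is $R_\infty/pR_\infty$, and $(R_\infty)^\flat \cong \varprojlim_F R_\infty/pR_\infty$ is visibly $p^\flat$-complete), it factors as $\varphi\colon \widehat{S_{\perf}} \to (R_\infty)^\flat$. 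To see $\varphi$ is an isomorphism, I would argue modulo $p^\flat$ and then lift by completeness: one checks that $\widehat{S_{\perf}}/p^\flat \widehat{S_{\perf}} \cong S_{\perf}/p^\flat S_{\perf}$, that the latter is the perfection of $S/p^\flat S = k[[x_2^\flat,\ldots,x_d^\flat]]$, and that this perfection is canonically $\bigcup_n R[p^{1/p^n}, x_2^{1/p^n},\ldots,x_d^{1/p^n}]/p \cong R_\infty/pR_\infty$ — the point being that $R_\infty/pR_\infty$ is already perfect (Frobenius is surjective because all $p$-power roots have been adjoined, and injective because, after reducing, one is looking at a perfection). Thus $\varphi \bmod p^\flat$ is an isomorphism, and since both sides are $p^\flat$-adically complete and $p^\flat$ is a nonzerodivisor on each (on the target because $\widehat{R_\infty}$ is $p$-torsion free perfectoid, hence its tilt is $p^\flat$-torsion free), a standard successive-approximation / derived Nakayama argument upgrades this to an isomorphism.

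The main obstacle, I expect, is the identification $S_{\perf}/p^\flat S_{\perf} \cong R_\infty/pR_\infty$ — that is, showing that the perfection of the power series ring $k[[x_2^\flat,\ldots,x_d^\flat]]$ matches $R_\infty/pR_\infty$ on the nose rather than merely up to some completion or up to almost isomorphism. One has to be careful that $R_\infty/pR_\infty = \bigl(\bigcup_n R[p^{1/p^n},\ldots,x_d^{1/p^n}]\bigr)/p$: the element $p$ itself becomes a formal variable's $p$-power-limit partner under tilting, so modulo $p$ the variable $p^{1/p^n}$ survives and one must track its contribution correctly, and one needs that forming the union commutes with reduction mod $p$ and that the resulting ring has no hidden completion issues (it does not, since each finite stage $R[p^{1/p^n},\ldots,x_d^{1/p^n}]/pR$ is a finite free module over $R/p$ times the appropriate root of $p$, and the colimit is exact). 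A secondary technical point is verifying that $(R_\infty)^\flat$ is genuinely $p^\flat$-adically complete and $p^\flat$-torsion free, which I would extract from the general theory of tilts of integral perfectoid rings together with the $p$-torsion freeness of $\widehat{R_\infty}$; once these facts are in place, the completeness-and-lifting endgame is routine.
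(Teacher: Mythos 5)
Your overall strategy is the same as the paper's: build a comparison map from the directed perfection of $k[[p^\flat,x_2^\flat,\ldots,x_d^\flat]]$ into $(R_\infty)^\flat$, check it is an isomorphism modulo $p^\flat$, and then upgrade via $p^\flat$-adic completeness. The paper packages the comparison map using the formalism of small tilts $(R_k)^{s.\flat}_{(p)}\cong k[[(p^\flat)^{1/p^k},\ldots,(x_d^\flat)^{1/p^k}]]$ from the theory of perfectoid towers, whereas you construct it directly from the universal property of the perfection and of completion; these are equivalent ways of writing down the same map.

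There is, however, a genuine error in your verification of the mod-$p^\flat$ isomorphism. You claim that $S_{\perf}/p^\flat S_{\perf}$ is the perfection of $S/p^\flat S = k[[x_2^\flat,\ldots,x_d^\flat]]$, and that this in turn equals $R_\infty/pR_\infty$, which you assert is a perfect ring with injective Frobenius. All three of these statements are false. The element $(p^\flat)^{1/p}\in S_{\perf}$ has $p$-th power equal to $p^\flat$, so its image in $S_{\perf}/p^\flat S_{\perf}$ is a nonzero nilpotent; hence $S_{\perf}/p^\flat S_{\perf}$ is not reduced, and cannot be the perfection of the domain $k[[x_2^\flat,\ldots,x_d^\flat]]$. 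Likewise $p^{1/p}\in R_\infty$ is not in $pR_\infty$ (it has smaller $p$-adic valuation than $p$) but its $p$-th power is $p$, so $p^{1/p}$ is a nonzero nilpotent of $R_\infty/pR_\infty$ and Frobenius is not injective there: $R_\infty/pR_\infty$ is semiperfect, not perfect. Taking the directed perfection of $S$ and reducing modulo the variable $p^\flat$ do not commute.

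What is true, and what your argument should actually exhibit, is the direct identification $S_{\perf}/p^\flat S_{\perf}\cong R_\infty/pR_\infty$, without any intermediate passage through a perfection of the reduced ring. Concretely, $S_{\perf}/p^\flat S_{\perf}\cong \varinjlim_n k[[(p^\flat)^{1/p^n},\ldots,(x_d^\flat)^{1/p^n}]]/(p^\flat)$, while $R_\infty/pR_\infty\cong\varinjlim_n R_n/pR_n$ with $R_n=W(k)[p^{1/p^n}][[x_2^{1/p^n},\ldots,x_d^{1/p^n}]]$; each level-$n$ piece on both sides is $k[\epsilon_n]/(\epsilon_n^{p^n})[[y_2,\ldots,y_d]]$ for suitable variables, and the transition maps agree. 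This is precisely the content of the paper's identification of the small tilts $(R_k)^{s.\flat}_{(p)}$ with complete regular local rings. Once this (correct) identification is in place, your completeness-and-lifting endgame via topological Nakayama is fine and matches the paper's appeal to \cite[Theorem 8.4]{M86}.
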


\begin{proof}
We follow the notation as in \cite[Example 3.24]{INS23}. Let $R_n:=R[p^{1/p^n},x_2^{1/p^n},\ldots,x_d^{1/p^n}]$, so it yields a perfectoid tower $\{R_n\}_{n \ge 0}$ (see \cite{INS23} for this notion). For each integer $k>0$, we define the $k$-th small tilt:
$$
(R_k)^{s.\flat}_{(p)}:=\varprojlim\big\{\cdots \xrightarrow{F_{k+1}} R_{k+1}/pR_{k+1} \xrightarrow{F_k} R_k/pR_k\big\},
$$
where $F_k$ is induced by the Frobenius map $\Frob:R_{k+1}/pR_{k+1} \to R_{k+1}/pR_{k+1}$. That is, the Frobenius map factors as $R_{k+1}/pR_{k+1} \twoheadrightarrow R_k/pR_k \hookrightarrow  R_{k+1}/pR_{k+1}$. Then the colimit of the commutative diagram
$$
\begin{CD}
\vdots @. \vdots \\
@VF_{k+1}VV @VF_{k+2}VV \\
R_{k+1}/pR_{k+1} @>>> R_{k+2}/pR_{k+2} \\
@VF_kVV @VF_{k+1}VV \\
R_k/pR_k @>>> R_{k+1}/pR_{k+1} \\
\end{CD}
$$
induces a ring map
$$
(R_k)^{s.\flat}_{(p)} \cong k[[(p^\flat)^{1/p^k},\ldots,(x_d^\flat)^{1/p^k}]] \hookrightarrow 
(R_{k+1})^{s.\flat}_{(p)} \cong k[[(p^\flat)^{1/p^{k+1}},\ldots,(x_d^\flat)^{1/p^{k+1}}]].
$$
This is a purely inseparable extension of complete regular local rings. In other words, $(R_{k+1})^{s.\flat}_{(p)}=\big((R_{k})^{s.\flat}_{(p)}\big)^{1/p}$. Let us set
$$
(R_\infty)^{s.\flat}_{(p)}:=\bigcup_{k \ge 0} (R_k)^{s.\flat}_{(p)},
$$
which is the directed perfection of $k[[p^\flat,x_2^\flat,\ldots,x_d^\flat]]$. There is a natural ring map $(R_\infty)^{s.\flat}_{(p)} \to (R_\infty)^\flat$, which further extends to the $p^\flat$-adic completion
\begin{equation}
\label{completering}
\widehat{(R_\infty)^{s.\flat}_{(p)}} \to (R_\infty)^\flat,
\end{equation}
as the latter ring is already $p^\flat$-adically complete. Since both the source and the target of $(\ref{completering})$ modulo $(p^\flat)^k$ coincide with each other for any $k>0$, it follows from \cite[Theorem 8.4]{M86} that $\widehat{(R_\infty)^{s.\flat}_{(p)}} \cong (R_\infty)^\flat$, as desired.
\end{proof}

\begin{lemma}
\label{CompleteInt1}
Let $A \hookrightarrow T$ be an integral extension such that $A$ is a Noetherian domain and $T \cong \prod T_i$ is a finite product of integral domains $T_i$. Let $x \in A$ be a nonzero element that is regular on $T$. Then $T$ is integrally closed in $T[1/x]$ if and only if $T$ is completely integrally closed in $T[1/x]$.
\end{lemma}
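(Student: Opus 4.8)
The plan is to treat the two implications separately, the harder one being that integral closedness of $T$ in $T[1/x]$ forces complete integral closedness. The implication ``completely integrally closed $\Rightarrow$ integrally closed'' is formal and uses neither the product hypothesis nor Noetherianity: if $y \in T[1/x]$ is integral over $T$, then $T[y]$ is a finitely generated $T$-module contained in $T[1/x]$, hence $T[y] \subseteq x^{-e}T$ for some $e \ge 0$, so $x^e y^n \in T$ for every $n>0$; thus $y$ is almost integral over $T$, and the hypothesis gives $y \in T$. From now on I concentrate on the converse: assuming $T$ is integrally closed in $T[1/x]$, show that $T$ is completely integrally closed in $T[1/x]$.

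First I would reduce to the case that $T$ is a domain. Writing $T = \prod_{i=1}^{N} T_i$ we have $T[1/x] = \prod_i T_i[1/x]$, and a tuple $(y_i)_i$ is almost integral (resp.\ integral) over $T$ if and only if each $y_i$ is almost integral (resp.\ integral) over $T_i$; for the almost-integral equivalence one uses a single exponent $m$ valid for all of the finitely many factors, and for the integral equivalence one reassembles monic polynomials over the $T_i$ into a monic polynomial over $T$ using the orthogonal idempotents of the product. Each $T_i$ is integral over the Noetherian domain $A/\fp_i$, where $\fp_i = \ker(A \to T_i)$, and the image of $x$ in $A/\fp_i$ is nonzero and regular on $T_i$. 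So the lemma for $T$ follows from the lemma for each factor, and I may assume $T$ is a domain; then $x \ne 0$ is automatically regular on $T$, $K := \Frac(A)$ embeds into $\Frac(T)$, and $\Frac(T)/K$ is an algebraic field extension because $T$ is integral over $A$.

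The key object is $B$, the integral closure of $A$ in $\Frac(T)$; note $T \subseteq B$ and $\Frac(B) = \Frac(T)$. I would establish two facts. (i) $T = B \cap T[1/x]$: the containment ``$\subseteq$'' is clear, and conversely any $w \in B \cap T[1/x]$ is integral over $A$, hence over $T$, hence lies in $T$ since $T$ is integrally closed in $T[1/x]$ --- this is the only point where the hypothesis is used. (ii) $B$ is completely integrally closed, i.e.\ any $y \in \Frac(T)$ that is almost integral over $B$ lies in $B$. Granting (i) and (ii), the converse is immediate: if $y \in T[1/x]$ is almost integral over $T$ then $x^m y^n \in T \subseteq B$ for all $n>0$, so $y \in B$ by (ii), whence $y \in B \cap T[1/x] = T$ by (i).

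It remains to prove (ii), which is where the real work sits. Suppose $y \in \Frac(T)$ and $d \in B \setminus \{0\}$ satisfy $dy^n \in B$ for all $n>0$. A minimal integral equation for $d$ over $A$ has nonzero constant term $a_0 \in A$, and $a_0 \in dB$; replacing $d$ by $a_0$ we may assume $a_0 y^n \in B$ with $a_0 \in A \setminus \{0\}$. Since $y$ lies in the finite subextension $L' := K(y)$ of $\Frac(T)/K$, and $B \cap L'$ is exactly the integral closure $B'$ of $A$ in $L'$, we get $a_0 y^n \in B'$ for all $n>0$. By the Mori--Nagata theorem $B'$ is a Krull domain, so any element of $L' = \Frac(B')$ that is almost integral over $B'$ already lies in $B'$; hence $y \in B' \subseteq B$, which proves (ii). The main obstacle is precisely this step: $B$ is itself a badly non-Noetherian ring living inside the possibly infinite algebraic extension $\Frac(T)/K$, so its complete integral closedness is not automatic; the point is the standard observation that any single almost-integral element already lies over a finite subextension, to which the Mori--Nagata theorem applies.
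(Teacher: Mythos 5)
Your proof is correct. The easy direction (completely integrally closed implies integrally closed) is handled cleanly via a bound on denominators in the finitely generated $T$-module $T[y]$. The reduction from $T=\prod T_i$ to a single domain factor is verified correctly, including the idempotent construction needed to show that componentwise integral closedness is equivalent to integral closedness of the product, and the observation that $x$ has nonzero image in each $A/\fp_i$ because $x$ is regular on $T$. The core of the hard direction is the introduction of $B$, the integral closure of $A$ in $\Frac(T)$, together with the two facts (i) $T=B\cap T[1/x]$ (the only place the hypothesis that $T$ is integrally closed in $T[1/x]$ is used) and (ii) $B$ is completely integrally closed. Your argument for (ii) — replacing the multiplier $d$ by a nonzero $a_0\in A$ via the nonvanishing constant term of a minimal integral equation, descending to the finite subextension $L'=K(y)$ where $B\cap L'$ is the integral closure $B'$ of $A$, and then invoking the Mori--Nagata theorem to conclude $B'$ is a Krull domain and hence completely integrally closed — is sound, and correctly isolates what Noetherianity of $A$ is needed for.

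The paper itself gives no self-contained argument: it simply reduces to the domain factors and cites \cite[Proposition 7.1]{NS23}. Your write-up therefore supplies the content that the paper outsources. The strategy of reducing to a finite subextension over the Noetherian base and applying Mori--Nagata is the standard way to prove this kind of statement, and it is very likely what underlies the cited proposition; so I would describe your proof as a faithful, self-contained unwinding of the paper's citation rather than a genuinely different route. What your version adds is transparency: it makes explicit that the real Noetherian input is Mori--Nagata, and that the only role of the hypothesis ``integrally closed in $T[1/x]$'' is fact (i).
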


\begin{proof}
Since $A$ is Noetherian, we can apply the same proof of \cite[Proposition 7.1]{NS23} to each direct factor of $T$.
\end{proof}

The following result had been obtained by Andreatta (see \cite[Proposition A.6]{An06}). Our proof is different from the one given by Andreatta, since we avoid the use of the precise calculation of ramification theory. Instead, we use big Cohen-Macaulay algebras of some type constructed by Gabber and Ramero in \cite{GR18}.

\begin{proposition}
\label{valuationtilting}
Let the notation be as above. Then the $p$-adic completion $\widehat{R_\infty}$ is a local integral domain. Moreover, $\widehat{R_\infty}$ is perfectoid and integrally closed in the field of fractions of $\widehat{R_\infty}$. Moreover, $(R_\infty)^\flat$ is also an integrally closed local domain.
\end{proposition}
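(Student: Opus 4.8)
The plan is to establish the statement first for $\widehat{R_\infty}$, by embedding $R_\infty$ into a normal big Cohen--Macaulay $R$-algebra furnished by Gabber--Ramero, and then to deduce the statement for $(R_\infty)^\flat$ through the tilting dictionary. Localness of the three rings is elementary. Each $R_n:=R[p^{1/p^n},x_2^{1/p^n},\dots,x_d^{1/p^n}]$ is a domain finite over $R$: since $R$ is a unique factorization domain with prime elements $p,x_2,\dots,x_d$, adjoining the $p^n$-th roots one variable at a time preserves the domain property by repeated Eisenstein irreducibility; hence each $R_n$ is local, the maximal ideals $\mathfrak m_{R_n}$ are compatible, and $R_\infty=\bigcup_n R_n$ is local, so $R_\infty/pR_\infty$ is local. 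As $p$ lies in the Jacobson radical of the $p$-adically complete ring $\widehat{R_\infty}$ and $\widehat{R_\infty}/p\widehat{R_\infty}=R_\infty/pR_\infty$, the ring $\widehat{R_\infty}$ is local; the same argument applied to $(R_\infty)^\flat/p^\flat(R_\infty)^\flat\cong R_\infty/pR_\infty$ shows $(R_\infty)^\flat$ is local.

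Next I would invoke \cite{GR18} to produce a big Cohen--Macaulay $R$-algebra $B$ that can be taken to be a $p$-adically complete normal domain containing $R_\infty$ (say, built from a suitable absolutely integrally closed, deeply ramified valuation ring dominating $R$, or from a normalized perfectoid big Cohen--Macaulay algebra). Passing to $p$-adic completions gives a map $\widehat{R_\infty}\to\widehat B=B$ which one checks is injective, so $\widehat{R_\infty}$ is a domain. For integral closedness one shows that $\widehat{R_\infty}\to B$ is pure, using the almost Cohen--Macaulay property of $\widehat{R_\infty}$ (Theorem~\ref{AlmostPure1}) and the big Cohen--Macaulay property of $B$; purity yields $bB\cap\widehat{R_\infty}=b\widehat{R_\infty}$ for every nonzerodivisor $b\in\widehat{R_\infty}$, hence $\widehat{R_\infty}=B\cap\Frac(\widehat{R_\infty})$, so any element of $\Frac(\widehat{R_\infty})$ integral over $\widehat{R_\infty}$ lies in the normal ring $B$, hence in $\widehat{R_\infty}$. (Alternatively one can bypass purity: $\widehat{R_\infty}$ is integrally closed in $\widehat{R_\infty}[1/p]$ because it is perfectoid, and $\widehat{R_\infty}[1/p]$ is itself a normal perfectoid Tate ring, so $\widehat{R_\infty}$ is integrally closed in $\Frac(\widehat{R_\infty})$.) Since $\widehat{R_\infty}$ is integral perfectoid by the discussion preceding the statement, this completes the case of $\widehat{R_\infty}$.

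To transfer to $(R_\infty)^\flat$, identify it with $\varprojlim_{x\mapsto x^p}\widehat{R_\infty}$ under componentwise multiplication: a zerodivisor $(s_i)_i$ forces $s_0$ to be a zerodivisor in the domain $\widehat{R_\infty}$, hence $s_0=0$ and then $s_i=0$ for all $i$ by reducedness, so $(R_\infty)^\flat$ is a domain; and ``local'' and ``integrally closed in the fraction field'' likewise pass from $\widehat{R_\infty}$ to $(R_\infty)^\flat$ along the tilting equivalence. As an independent check, Proposition~\ref{basictilting} exhibits $(R_\infty)^\flat$ as the $p^\flat$-adic completion of the directed perfection $A_\infty=\bigcup_k A_k$ of $A_0=k[[p^\flat,x_2^\flat,\dots,x_d^\flat]]$, where $A_k=k[[(p^\flat)^{1/p^k},\dots,(x_d^\flat)^{1/p^k}]]$; here $A_\infty$ is a normal local domain (a filtered union of normal domains, any element integral over $A_\infty$ satisfying a monic equation with coefficients in, and lying in the fraction field of, some $A_\ell$, hence belonging to $A_\ell$), and unwinding the completion displays $(R_\infty)^\flat$ as a ring of ``generalized power series'' over the directed perfection of $k[[x_2^\flat,\dots,x_d^\flat]]$, visibly a domain by comparison of lowest-order terms and, when $d=1$, a valuation ring with value group $\mathbb Z[1/p]$ and residue field $k$; the case $d\ge 2$ then follows by peeling off $p^\flat$ and inducting on the number of variables.

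The hard part is the integral closedness in the \emph{full} fraction field, which is strictly larger than the $p$-inverted ring $\widehat{R_\infty}[1/p]$, where integral closedness is automatic for a perfectoid ring. One cannot avoid either an honest purity statement for $\widehat{R_\infty}\to B$---\emph{honest}, not merely almost---equivalently a genuine use of the normal big Cohen--Macaulay algebra $B$ (and thus of \cite{GR18}), or a genuine input on the normality of the perfectoid Tate ring $\widehat{R_\infty}[1/p]$, or, on the tilted side, a genuine use of the explicit generalized-power-series structure. A secondary difficulty is to pin down in \cite{GR18} a big Cohen--Macaulay algebra that is simultaneously a normal domain, $p$-adically complete, and receives $R_\infty$.
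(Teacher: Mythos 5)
Your overall plan — embed $\widehat{R_\infty}$ into a Gabber--Ramero big Cohen--Macaulay $R$-algebra, deduce domainness and integral closedness, then pass to the tilt — matches the paper's strategy in outline, and the localness argument is fine (the paper's is shorter: every maximal ideal of $\widehat{R_\infty}$ contains $p$, so $\fM\widehat{R_\infty}$ is the unique one). The domain part also works; the paper actually injects $\widehat{R_\infty}$ into $\widehat{R^+}$ and cites Heitmann's theorem rather than injecting into the GR18 algebra $B$, but either route is fine once injectivity is checked.

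The real gap is in the integral closedness step, and you partly flag it yourself. Your Option A asserts that purity of $\widehat{R_\infty}\to B$ follows from the almost Cohen--Macaulay property of $\widehat{R_\infty}$ and the big Cohen--Macaulay property of $B$, but no argument is given and I do not see how to produce one: the paper establishes only that $\widehat{R_\infty}\to B$ is $p$-completely faithfully flat, i.e.\ $\widehat{R_\infty}/p^n\to B/p^n$ is faithfully flat for all $n$, and this does not yield $bB\cap\widehat{R_\infty}=b\widehat{R_\infty}$ for an arbitrary nonzerodivisor $b$ (in particular $b$ may lie in the prime $P=\bigcup_n p^{1/p^n}\widehat{R_\infty}$, where the mod-$p^n$ reductions carry no information). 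The paper instead introduces the limiting $p$-adic valuation $v_\infty$ (via \cite[Proposition 9.1.16]{GR18}), factors $b=p^{\epsilon}c$ with $\epsilon=v_\infty(b)$ so that $c\notin P$, proves by a reduction to the Noetherian rings $R_k/p^nR_k$ that $p^n,c$ is a regular sequence on $\widehat{R_\infty}$, uses this to show $\widehat{R_\infty}/c\widehat{R_\infty}$ is already $p$-adically complete, and only then descends $a\in cB$ to $a\in c\widehat{R_\infty}$ across the $p$-completely faithfully flat map; the leftover factor $p^{\epsilon}$ is absorbed by the integral closedness of the perfectoid ring $\widehat{R_\infty}$ inside $\widehat{R_\infty}[1/p]$ via \cite[Lemma 3.3]{NS23}. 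That case split and the regular-sequence lemma are the actual content and are missing from your proposal. Your Option B is circular: normality of the Tate ring $\widehat{R_\infty}[1/p]$ in $\Frac(\widehat{R_\infty})$ is essentially equivalent to the claim being proved, not something one gets for free from perfectoidness. Finally, for $(R_\infty)^\flat$ the domain argument via $\varprojlim_{x\mapsto x^p}\widehat{R_\infty}$ is fine, but integral closedness in the \emph{full} fraction field does not ``pass along the tilting equivalence''; the paper re-runs the whole valuation/regular-sequence argument on the characteristic-$p$ side, using Proposition~\ref{basictilting} to realize $(R_\infty)^\flat$ as the $p^\flat$-completion of the perfect closure of $k[[p^\flat,x_2^\flat,\ldots,x_d^\flat]]$, and your generalized-power-series sketch for $d\ge 2$ is not carried out.
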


\begin{proof}
Let $\widehat{R_\infty}$ be the $p$-adic completion of $R_\infty$. Then it readily follows that it is a perfectoid ring. Recall that $R_\infty$ has the unique maximal ideal $\fM$. As any maximal ideal of $\widehat{R_\infty}$ contains $p$, $\fM \widehat{R_\infty}$ is the unique maximal ideal, showing that $\widehat{R_\infty}$ is a local ring. Let us show that it is an integral domain. For this, note that $R_\infty \to R^+$ is an integral extension and $R_\infty$ is integrally closed in $R_\infty[1/p]$. Using this fact, one deduces that the natural map $R_\infty/p^nR_\infty \to R^+/p^nR^+$ is injective for all $n>0$. Taking the inverse limit, we get an injection $\widehat{R_\infty} \hookrightarrow \widehat{R^+}$. By the result of Heitmann \cite{Hei22}, we see that $\widehat{R^+}$ and thus $\widehat{R_\infty}$ are domains. Recall that $R_\infty$ is the ascending union of regular local rings $R_n=R[p^{1/p^n},x_2^{1/p^n},\ldots,x_d^{1/p^n}]$. Let $v_0:\Frac(R) \to \mathbb{Q} \cup \{\infty\}$ be the normalized $p$-adic valuation. In other words, for $0 \ne x \in R$, we set
$$
v_0(x):=\min\{k~|~k \in \mathbb{Z}~\mbox{such that}~x \in p^k R\}.
$$
That this is indeed a valuation follows from \cite[Theorem 6.7.8]{SwHu}. Since $R \to R_\infty$ is an integral extension of domains, $v_0$ naturally extends to a valuation on $v:\Frac(R_\infty) \to \mathbb{Q} \cup \{\infty\}$. For any nonzero $x \in \widehat{R_\infty}$, let us choose a Cauchy sequence $\{x_n\}$ such that $x_n \in R_\infty$ and $\displaystyle{\lim_{n \to \infty}} x_n=x$. Then in view of \cite[Propositon 9.1.16]{GR18}, we get a valuation $v_\infty:\Frac(\widehat{R_\infty}) \to \mathbb{Q} \cup \{\infty\}$ uniquely by letting $v_\infty(x):=\displaystyle \lim_{n \to \infty} v(x_n)$. In other words, $v$ (resp. $v_\infty$) is nothing other than the $p$-adic valuation on $R_\infty$ (resp. $\widehat{R_\infty}$). Set $P:=\bigcup_{n>0}p^{1/p^n}\widehat{R_\infty}$. Then this is a prime ideal of $\widehat{R_\infty}$ and the center of $v_\infty$ is exactly $P$. Recall that $\widehat{R_\infty}$ is an integral perfectoid balanced big Cohen-Macaulay $R$-algebra in view of $(\ref{BigMac12345})$. Then by \cite[Theorem 17.5.96]{GR18}, there is a ring map
$$
\widehat{R_\infty} \to B
$$
such that $B$ is a balanced big Cohen-Macaulay algebra over $R$ and $B$ is an absolutely integrally closed domain. Since $R_n$ is a regular local ring, it follows that $B$ is faithfully flat over $R_n$. Hence $B$ is faithfully flat over $R_\infty$. In conclusion, the extension $\widehat{R_\infty} \to B$ is $p$-completely faithfully flat. Now let $x=a/b \in \Frac(\widehat{R_\infty})$ be a nonzero element with $a,b \in \widehat{R_\infty}$ that is integral over $\widehat{R_\infty}$. But then $x$ is also contained in $\Frac(B)$ that is integral over $B$, we must get $x \in B$, because $B$ is integrally closed in $\Frac(B)$. Thus, we have $a \in bB$, or equivalently, the multplication map $a:B \to B/bB$ is the zero map, which implies that \begin{equation}
\label{zeromap1}
a:B/p^nB \to B/(b,p^n)B~\mbox{is the zero map}. 
\end{equation}
Without losing generality, we may assume that $b \in \widehat{R_\infty}$ is not a unit. Set $\epsilon:=v_\infty(b)$. Then we can write $b=p^\epsilon c$ for some $c \in \widehat{R_\infty} \setminus P$. 

First assume that $c \in (\widehat{R_\infty})^\times$. In this case, we have $x \in \widehat{R_\infty}$ if and only if $cx \in \widehat{R_\infty}$. So it suffices to prove the latter. Then since $cx \in \Frac(\widehat{R_\infty})$ is still integral over $\widehat{R_\infty}$ and $cx=a/p^\epsilon \in \widehat{R_\infty}[1/p]$, it follows from \cite[Lemma 3.3]{NS23} and integral closedness of $R_\infty$ in $R_\infty[1/p]$ that $cx \in \widehat{R_\infty}$.

Next assume that $c \in \widehat{R_\infty}$ is not a unit. Then $c$ is contained in the unique maximal ideal. Let $\alpha \in \widehat{R_\infty}$ be any element. Quite generally, we claim that
\begin{equation}
\label{zeromap2}
c+p\alpha,p^n~\mbox{forms a regular sequence on}~\widehat{R_\infty}~\mbox{for all}~n>0.
\end{equation}
First, we prove that $p^n,c$ is a regular sequence on $\widehat{R_\infty}$. It is clear that $p^n \in \widehat{R_\infty}$ is a regular element, so we prove that the multiplication map $c:\widehat{R_\infty}/p^n\widehat{R_\infty} \to \widehat{R_\infty}/p^n\widehat{R_\infty}$ is injective. Let us write $\overline{c} \in \widehat{R_\infty}/p^n\widehat{R_\infty}$ for the image of $c \in \widehat{R_\infty}$. Since $\widehat{R_\infty}/p^n\widehat{R_\infty}=\varinjlim_{i>0} R_i/p^nR_i$, there is an integer $k>0$ such that $\overline{c} \in R_k/p^nR_k$. Choose a lift $c' \in R_k$ of $\overline{c}$. Set $\fp:=R_k \cap P$. Then $c' \notin \fp$ and $\fp$ is the only associated prime ideal of $R_k/p^nR_k$, the map $c':R_k/p^nR_k \to R_k/p^nR_k$ is injective because of $c \in \widehat{R_\infty} \setminus P$. Since $R_k/p^nR_k \to R_\infty/p^nR_\infty \cong \widehat{R_\infty}/p^n\widehat{R_\infty}$ is faithfully flat for all $n>0$, the base change of $c':R_k/p^nR_k \to R_k/p^nR_k$ gives the injectivity of $c:\widehat{R_\infty}/p^n\widehat{R_\infty} \to \widehat{R_\infty}/p^n\widehat{R_\infty}$. Thus, we proved that the sequence $p^n,c$ is regular on $\widehat{R_\infty}$. By letting $n=1$, the sequence $p,c+p\alpha$ is also regular on $\widehat{R_\infty}$. Then it follows that $p^n,c+p\alpha$ is regular as well. Since $\widehat{R_\infty}$ is evidently $p$-adically complete, the assertion $(\ref{zeromap2})$ follows readily. We use this claim by taking $\alpha=0$.

Notice that $R_\infty/p^nR_\infty \cong \widehat{R_\infty}/p^n\widehat{R_\infty} \to B/p^nB$ is faithfully flat for all $n>0$. So the faithfully flat descent together with $(\ref{zeromap1})$ imply that the composite map
$$
a:\widehat{R_\infty}/p^n\widehat{R_\infty} \to \widehat{R_\infty}/(b,p^n)\widehat{R_\infty} \twoheadrightarrow \widehat{R_\infty}/(c,p^n)\widehat{R_\infty}
$$
must be the zero map for all $n>0$. Taking the inverse limit over $n \in \mathbb{N}$, we get that
\begin{equation}
\label{zeromap3}
a:\widehat{R_\infty} \to \widehat{\widehat{R_\infty}/c\widehat{R_\infty}}
\end{equation}
is the zero map, where $\widehat{\widehat{R_\infty}/c\widehat{R_\infty}}$ is the $p$-adic completion of $\widehat{R_\infty}/c\widehat{R_\infty}$.\footnote{As $\widehat{R_\infty}$ is not Noetherian, one cannot immediately conclude that its quotient ring is separated in the $p$-adic topology.} Now we have the commutative diagram:
$$
\begin{CD}
0 @>>> \widehat{R_\infty} @>\times c >> \widehat{R_\infty} @>>> \widehat{R_\infty}/c\widehat{R_\infty} @>>> 0 \\
@. @V\times p^nVV @V\times p^nVV @V\times p^nVV \\
0 @>>> \widehat{R_\infty} @>\times c >> \widehat{R_\infty} @>>> \widehat{R_\infty}/c\widehat{R_\infty} @>>> 0 \\
\end{CD}
$$
Thanks to $(\ref{zeromap2})$, the snake lemma yields a short exact sequence:
\begin{equation}
\label{zeromap4}
0 \to \widehat{R_\infty}/p^n \widehat{R_\infty} \xrightarrow{\times c} \widehat{R_\infty}/p^n \widehat{R_\infty} \to \widehat{R_\infty}/(c,p^n)\widehat{R_\infty} \to 0.
\end{equation}
Taking inverse limit of $(\ref{zeromap4})$ with respect to $n \in \mathbb{N}$, we obtain an isomorphism $\widehat{R_\infty}/c\widehat{R_\infty} \cong \widehat{\widehat{R_\infty}/c\widehat{R_\infty}}$. Now combining this fact with $(\ref{zeromap3})$, we find that $a:\widehat{R_\infty} \to \widehat{R_\infty}/c\widehat{R_\infty}$ is the zero map. Hence we have $a=cd$ for some $d \in \widehat{R_\infty}$ and 
$$
x=\frac{a}{b}=\frac{a}{p^\epsilon c}=\frac{cd}{p^\epsilon c}=\frac{d}{p^\epsilon} \in \widehat{R_\infty}[\frac{1}{p}] \subseteq  \Frac(\widehat{R_\infty}).
$$
By \cite[Lemma 3.3]{NS23}, we must get $x \in \widehat{R_\infty}$. 

As for $(R_\infty)^{\flat}$, we use Proposition \ref{basictilting} and note that $(R_\infty)^{\flat}$ is the $p^\flat$-adic completion of the perfect closure of the complete regular local ring $A:=k[[p^\flat,x_2^\flat,\ldots,x_d^\flat]]$. So by setting $v_0:\Frac(A) \to \mathbb{Q} \cup \{\infty\}$ to be
$$
v_0(x):=\min\{k~|~k \in \mathbb{Z}~\mbox{such that}~x \in (p^\flat)^k A\},
$$
the same proof in the case of $\widehat{R_\infty}$ goes through without making essential changes. This finishes the proof of the proposition.
\end{proof}

\begin{remark}
The $p$-completed ring $\widehat{R_\infty}$ plays a fundamental role in the computations of certain numerical invariants, such as $F$-signature or Hilbert-Kunz multiplicities and their mixed characteristic variants. See the papers \cite{BMPSTWW24} and \cite{CLMST22} for these topics. Recently, the paper \cite{HeiMa25} establishes that $\widehat{R_\infty}$ is completely integrally closed in its field of fractions by using a different method.
\end{remark}

We use Galois theory of commutative rings to establish the main result. The next result on the tilting correspondence for decompleted perfectoid rings is probably known to experts. As we are unable to find an appropriate reference, we give a proof. Following the convention in \cite{NS18}, let us introduce the categories: ${\bf{F.Et}}^a_{\rm{G}}(A)$ and ${\bf{F.Et}}^a_{\rm{G}}(A^\flat)$ for the basic setup $(A,(\varpi)^{1/p^\infty})$. The category ${\bf{F.Et}}^a_{\rm{G}}(A)$ (resp. ${\bf{F.Et}}^a_{\rm{G}}(A^\flat)$) is obtained as the localization of the category of $(\varpi)^{1/p^\infty}$-almost Galois coverings over $A$ (resp. $A^\flat$) with respect to the class of $(\varpi)^{1/p^\infty}$-almost isomorphisms (resp. $(\varpi^\flat)^{1/p^\infty}$-almost isomorphisms). For Henselization of rings along an ideal and their basic properties, we refer the reader to \cite[Tag 09XE]{Stacks}.

\begin{proposition}[Galois-tilting correspondence]
\label{GaloisCovProp}
Let $A$ be a $V$-algebra such that $V$ is a Witt-perfect valuation ring and there is a nonzero element $\varpi \in V$ such that $\varpi=p v$ for a unit $v \in V^\times$ and $\varpi^{1/p^n} \in V$ for all $n>0$. Assume that the Frobenius map $F:A/\varpi A \to A/\varpi A$ is surjective and $A$ is integrally closed in $A[1/\varpi]$. Assume further that $A$ is $\varpi$-adically Henselian and $\Spec(A)$ is connected. Then the tilting operation gives an equivalence of categories:
$$
{\bf{F.Et}}^a_{\rm{G}}(A) \xrightarrow{\Phi} {\bf{F.Et}}^a_{\rm{G}}(A^\flat).
$$
Under this correspondence, if $G$ is a finite group, a $(\varpi)^{1/p^\infty}$-almost $G$-Galois covering $B$ over $A$ goes to a $(\varpi^\flat)^{1/p^\infty}$-almost $G$-Galois covering $B^\flat$ over $A^\flat$.
\end{proposition}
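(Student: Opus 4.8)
The plan is to reduce the statement to the case where $A$ is $\varpi$-adically complete --- where it becomes the almost-mathematics incarnation of Scholze's tilting equivalence --- and then to keep track of the finite group $G$. Two preliminary observations make the reduction possible. First, the tilt depends only on the reduction modulo $\varpi$: since $v\in A^\times$ gives $pA=\varpi A$, one has $A/pA=\widehat{A}/p\widehat{A}$ for the $\varpi$-adic completion $\widehat{A}$, hence $A^\flat=(\widehat{A})^\flat$ as $\mathbb{F}_p$-algebras, with $\varpi^\flat$ corresponding under this identification; in particular $A^\flat$ is a $\varpi^\flat$-adically complete integral perfectoid $\mathbb{F}_p$-algebra regardless of whether $A$ is complete. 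Second, $(A,\varpi A)$ is a Henselian pair with $\Spec(A)$ connected, and the same then holds for $(\widehat{A},\varpi\widehat{A})$ (idempotents lift uniquely along Henselian pairs and along $\varpi$-adic completion, and $\widehat{A}/\varpi\widehat{A}=A/\varpi A$). I would invoke the almost version of Gabber's theorem that the finite \'etale site is invariant under Henselian pairs --- extracted from \cite{GR18}, after checking that an almost finite \'etale (Galois) covering of $A$, its $\varpi$-adic completion, and their common reduction modulo $\varpi$ carry compatible $(\varpi)^{1/p^\infty}$-almost structures --- to obtain an equivalence $\mathbf{F.Et}^a_{\mathrm{G}}(A)\xrightarrow{\ \sim\ }\mathbf{F.Et}^a_{\mathrm{G}}(\widehat{A})$. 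Combining the two, it suffices to treat the case that $A$ is $\varpi$-adically complete.

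In that case $A$ is an integral perfectoid ring of the expected unit-multiple-of-$p$ type, and the core input is the tilting equivalence $\mathbf{F.Et}^a(A)\xrightarrow{\ \sim\ }\mathbf{F.Et}^a(A^\flat)$ for all almost finite \'etale coverings (before imposing a $G$-action), due to Scholze and available in the almost-algebraic language in \cite{GR18} (see also \cite{NS18}). Concretely, $\Phi$ sends an almost finite \'etale $A$-algebra $B$ to its tilt $B^\flat=\varprojlim_F B/pB$, which is almost finite \'etale over $A^\flat$; a quasi-inverse is given by untilting, e.g.\ by assigning to an almost finite \'etale $A^\flat$-algebra $C$ the unique $\varpi$-adically complete almost finite \'etale $A$-algebra whose reduction modulo $\varpi$ agrees with that of $C$ (existence and uniqueness of such a lift come from the deformation theory of \'etale algebras over the complete base $A$, together with the canonical identification of $A/\varpi A$ with the relevant reduction of $A^\flat$). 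I would quote this equivalence rather than reprove it.

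It then remains to transport the $G$-structure. A semilinear $G$-action on $B$ over $A$ is carried by functoriality to one on $\Phi(B)$ over $A^\flat$, and conversely, so $\Phi$ matches $G$-equivariant objects and morphisms on the two sides; what must be verified is that $\Phi$ sends each of the two conditions in Definition \ref{AlmostGaloisCov} to the corresponding condition over $A^\flat$. Condition (2) is immediate, since tilting commutes with finite products and with base change: the comparison map $B^\flat\otimes_{A^\flat}B^\flat\to\prod_{g\in G}B^\flat$ is, up to the canonical identifications, the one obtained from $B\otimes_A B\to\prod_{g\in G}B$ under the equivalence, hence an almost isomorphism exactly when that one is. For condition (1) one needs $\Phi$ to commute with formation of $G$-invariants up to almost isomorphism; here I would combine Proposition \ref{AlmostGaloisVanish}, which makes $(-)^G=\Ext^0_{\mathbb{Z}[G]}(\mathbb{Z},-)$ almost exact on the modules at hand, with Proposition \ref{MilnorExact}, which lets one commute $(-)^G$ past the inverse limit $\varprojlim_F$ implicit in $(-)^\flat$ since the relevant $\mathbf{R}^1\varprojlim$ term is almost zero. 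This shows $\Phi$ restricts to an equivalence $\mathbf{F.Et}^a_{\mathrm{G}}(A)\xrightarrow{\ \sim\ }\mathbf{F.Et}^a_{\mathrm{G}}(A^\flat)$ taking $(\varpi)^{1/p^\infty}$-almost $G$-Galois covers to $(\varpi^\flat)^{1/p^\infty}$-almost $G$-Galois covers.

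The main obstacle is the decompletion step together with the equivariant bookkeeping around it: one must pin down the almost structures on $A$, $\widehat{A}$, $A^\flat$ and their reductions modulo $\varpi$ precisely enough to apply both the Henselian-pair invariance of the finite \'etale site and the perfectoid tilting equivalence inside the almost category, and then confirm that these equivalences respect $G$-actions and the passage to invariants --- the last point being exactly where the inverse limit in the definition of the tilt forces one to appeal to Propositions \ref{AlmostGaloisVanish} and \ref{MilnorExact}.
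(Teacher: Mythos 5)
Your overall strategy matches the paper's: reduce to the $\varpi$-adically complete case, invoke the almost-purity/tilting equivalence for almost finite \'etale algebras (from \cite{NS18}, which cites Scholze and Kedlaya--Liu), and then verify that the two conditions of Definition~\ref{AlmostGaloisCov} tilt correctly by combining Proposition~\ref{AlmostGaloisVanish} (almost vanishing of higher Galois cohomology) with Proposition~\ref{MilnorExact} to pass $(-)^G$ through the inverse limit defining $(-)^\flat$. That last step is exactly the heart of the paper's argument in the complete case, and your proposal captures it accurately, including the use of almost Nakayama implicitly needed to lift the mod-$\varpi^\flat$ almost isomorphism to an almost isomorphism $(B^\flat)^G \cong A^\flat$.

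Two places where you gloss over something the paper handles more carefully. First, in the decompletion step you appeal directly to ``an almost version of Gabber's theorem'' on Henselian-pair invariance for the almost finite \'etale site; the paper avoids having to formulate or locate such an almost statement by factoring through the generic fiber: it uses the equivalences $\Phi_1$, $\Phi_3$, $\Phi_4$ of \cite[Corollary 5.12]{NS18}, observes that almost $G$-Galois covers over $A$ correspond to honest $G$-Galois covers over $A[1/\varpi]$, identifies the latter with $G$-torsors via \cite[Proposition 5.3.16]{Sz09}, and then applies the (non-almost) Henselian-pair comparison \cite[Theorem 5.8.14]{GR03} on the generic fibers. If you want to stay in the almost category as you propose, you would need to either extract such an almost statement carefully or reroute as the paper does. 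Second, you check the two conditions of Definition~\ref{AlmostGaloisCov} only in the forward direction; the paper verifies them separately for the quasi-inverse $D\mapsto D^\sharp=W(D)/(pu-[\varpi^\flat])W(D)$, running the same $\mathbf{R}^1\varprojlim$ argument now over the tower $W_n(D)$. That verification is not automatic from the forward direction together with the underlying equivalence of categories, and it is where your ``and conversely'' would have to be turned into an actual argument.
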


\begin{proof}
By integral closedness of $A$ in $A[1/\varpi]$, it can be proved that the Frobenius map on $A/\varpi A$ induces an isomorphism $A/\varpi^{1/p}A \cong A/\varpi A$. First, we prove the proposition under the assumption that $A$ is $\varpi$-adically complete, which we will do so in what follows. By the classical almost purity theorem proved by Scholze, Kedlaya-Liu (see \cite[Corollary 5.12]{NS18} for the statement and \cite[Theorem 3.6.21]{KL15} for the tilting correspondence), the functor $\Phi$ gives an equivalence between the category of $(\varpi)^{1/p^\infty}$-almost finite \'etale extensions over $A$ and the category of $(\varpi^\flat)^{1/p^\infty}$-almost finite \'etale extensions over $A^\flat$. Its quasi-inverse is given by
\begin{equation}
\label{quasi-inverse123}
D \mapsto D^\sharp:=W(D)/(pu-[\varpi^\flat])W(D)~\mbox{for some unit}~u \in W(A^\flat)^\times,
\end{equation}
and we refer the reader to \cite[Proposition 2.1.9]{CS24} in the context of perfectoid rings. The functor $\Phi$ in the proposition is just the restriction to the full subcategory consisting of all $(\varpi)^{1/p^\infty}$-almost Galois coverings over $A$. We claim that the functor $\Phi$ lands in the stated category. Namely, pick a $(\varpi)^{1/p^\infty}$-almost $G$-Galois covering $A \to B$. Then we need to show that $A^\flat \to B^\flat$ is a $(\varpi^\flat)^{1/p^\infty}$-almost $G$-Galois covering. Since $A$ is assumed to be integrally closed in $A[1/\varpi]$, $A \to B$ is $(\varpi)^{1/p^\infty}$-almost finite \'etale, and we are working in the category of almost modules, we may replace $B$ by the integral closure of $B$ in $B[1/\varpi]$. In this situation, the Frobenius map on $B/\varpi B$ is surjective and we use this fact below. First of all, note that $B^\flat$ has an action by $G$ induced from the action on $B$ by the definition of tilts. Indeed, since the action of $G$ on $B$ restricts to the trivial action on $A$, we see that the action of $G$ on $B^\flat$ restricts to the trivial action on $A^\flat$. In particular, we obtain the natural map $A^\flat \to (B^{\flat})^G$. Consider the exact sequence: $0 \to B \xrightarrow{\times \varpi} B \to B/\varpi B \to 0$. It induces a long exact sequence
$$
0 \to B^G \xrightarrow{\times \varpi} B^G \to (B/\varpi B)^G \to H^1(G,B) \to \cdots.
$$
Since $H^1(G,B)$ almost vanishes in view of Proposition \ref{AlmostGaloisVanish} and $B^G$ is $(\varpi)^{1/p^\infty}$-almost isomorphic to $A$, we get
\begin{equation}
\label{almostiso10}
(B/\varpi B)^G~\mbox{is}~(\varpi)^{1/p^\infty}\mbox{-almost isomorphic to}~A/\varpi A.
\end{equation}
Again by Proposition \ref{AlmostGaloisVanish}, the $A$-module $H^1(G,B/\varpi B)$ is $(\varpi)^{1/p^\infty}$-almost zero. We apply Proposition \ref{MilnorExact} by letting $n=1$ to the inverse system of $\mathbb{Z}[G]$-modules:
$$
\{\cdots \xrightarrow{F} B/\varpi B \xrightarrow{F} B/\varpi B\},
$$
which we view as a sequence of $A^\flat$-modules via $A^\flat \to A^\flat/\varpi^\flat A \cong A/\varpi A$. Since the Frobenius $F$ is surjective on $B/\varpi B$, we have $\mathbf{R}^1\varprojlim B/\varpi B \cong 0$. Note that the Frobenius map on $B/\varpi B$ commutes with any ring map and the natural injection $\varpi^{1/p}B/\varpi B \to B/\varpi B$ is a map of $\mathbb{Z}[G]$-modules. So there is a short exact sequence of $\mathbb{Z}[G]$-modules $0 \to \varpi^{1/p}B/\varpi B \to B/\varpi B \xrightarrow{F} B/\varpi B \to 0$. Then we get an exact sequence
$$
0 \to \Hom_{\mathbb{Z}[G]}(\mathbb{Z},\varpi^{1/p}B/\varpi B) \to \Hom_{\mathbb{Z}[G]}(\mathbb{Z},B/\varpi B) \xrightarrow{F} \Hom_{\mathbb{Z}[G]}(\mathbb{Z},B/\varpi B)
$$
$$
\to \Ext^1_{\mathbb{Z}[G]}(\mathbb{Z},B/\varpi B) \to \cdots.
$$
Since the $B$-module $\varpi^{1/p}B/\varpi B$ has a semi-linear $G$-action, it follows from Proposition \ref{AlmostGaloisVanish} that $\Ext^1_{\mathbb{Z}[G]}(\mathbb{Z},\varpi^{1/p}B/\varpi B)=H^1(G,\varpi^{1/p}B/\varpi B)$ is $(\varpi)^{1/p^\infty}$-almost zero. Thus, the inverse system $\{\Hom_{\mathbb{Z}[G]}(\mathbb{Z},B/\varpi B)\}$, which is defined by the Frobenius map on $B/\varpi B$, satisfies the $(\varpi)^{1/p^\infty}$-almost Mittag-Leffler condition. So we have $\mathbf{R}^1 \varprojlim \Hom_{\mathbb{Z}[G]}(\mathbb{Z},B/\varpi B)$ is $(\varpi^\flat)^{1/p^\infty}$-almost zero. Hence by Proposition \ref{MilnorExact}, $H^1(G,B^\flat)$ is $(\varpi^\flat)^{1/p^\infty}$-almost isomorphic to $\varprojlim H^1(G,B/\varpi B)$. It follows that $H^1(G,B^\flat)$ is $(\varpi^\flat)^{1/p^\infty}$-almost zero. By taking cohomology of the exact sequence $0 \to B^\flat \xrightarrow{\times \varpi^\flat} B^\flat \to B^\flat/\varpi^\flat B^\flat \to 0$, we find that $(B^\flat)^G/\varpi^\flat (B^\flat)^G$ is almost isomorphic to $(B^\flat/\varpi^\flat B^\flat)^G$. Since $A/\varpi A \cong A^\flat/\varpi^\flat A^\flat$ and $B/\varpi B \cong B^\flat/\varpi^\flat B^\flat$, it follows from $(\ref{almostiso10})$ that
$$
A^\flat/\varpi^\flat A^\flat~\mbox{is}~(\varpi^\flat)^{1/p^\infty}\mbox{-almost isomorphic to}~(B^\flat)^G/\varpi^\flat (B^\flat)^G.
$$
Note that $(B^\flat)^G$ is $\varpi^\flat$-adically separated as it is a subring of the $\varpi^\flat$-adically complete ring $B^\flat$. Now we can apply almost Nakayama's lemma (see Lemma \ref{TopNakayama}) to get that $(B^\flat)^G$ is $(\varpi^\flat)^{1/p^\infty}$-almost isomorphic to $A^\flat$, which checks the first condition of Definition \ref{AlmostGaloisCov}. The verification of the second condition of Definition \ref{AlmostGaloisCov} is left as an exercise, as it can be done by taking the quotient modulo $\varpi$ (or $\varpi^\flat$) and utlizing \cite[Theorem 5.3.27]{GR03}.

Next we claim that the quasi-inverse of $\Phi$ is given by $D \mapsto D^\sharp:=W(D)/(pu-[\varpi^\flat])W(D)$ for a $(\varpi^\flat)^{1/p^\infty}$-almost $G$-Galois extension $D$ over $A^\flat$. What needs to be proved is that $D^\sharp$ is a $(\varpi)^{1/p^\infty}$-almost $G$-Galois extension over $A$. The group $G$ acts on $W(D)$ compatibly with the reduction $W(D) \to D$ because any ring map on $D$ lifts to a unique ring map on $W(D)$. It is true that the element $pu-[\varpi^\flat]$ is fixed by $G$ because we have $u \in W(A^\flat)^\times$ by $(\ref{quasi-inverse123})$. Hence $G$ acts on $D^\sharp$ as $A$-automorphisms. For $n \in \mathbb{N}$, $W_n(A^\flat) \to W_n(D)$ is readily shown to be a $[\varpi^\flat]^{1/p^\infty}$-almost $G$-Galois extension.  In particular, $\Hom_{\mathbb{Z}[G]}(\mathbb{Z},W_n(D)) \cong W_n(D)^G$ is $[\varpi^\flat]^{1/p^\infty}$-almost isomorphic to $W_n(A)$. Hence the inverse system $\{\Hom_{\mathbb{Z}[G]}(\mathbb{Z},W_n(D))\}_{n \in \mathbb{N}}$  satisfies the $(\varpi)^{1/p^\infty}$-almost Mittag-Leffler condition. Thus, $\mathbf{R}^1 \varprojlim_{n \in \mathbb{N}}\Hom_{\mathbb{Z}[G]}(\mathbb{Z},W_n(D))$  is $(\varpi)^{1/p^\infty}$-almost zero. By Proposition \ref{MilnorExact}, 
$H^1(G,W(D))$ is $[\varpi^\flat]^{1/p^\infty}$-almost isomorphic to $\varprojlim_{n>0} H^1(G,W_n(D))$. Since Proposition \ref{AlmostGaloisVanish} gives that $H^1(G,W_n(D))$ is $[\varpi^\flat]^{1/p^\infty}$-almost zero, we have that $H^1(G,W(D))$ is $[\varpi^\flat]^{1/p^\infty}$-almost zero. We use this fact below. By taking the Galois cohomology of the short exact sequence
$$
0 \to W(D) \xrightarrow{\times (pu-[\varpi^\flat])} W(D) \to D^\sharp \to 0,
$$
we obtain that $W(D)^G/(pu-[\varpi^\flat])W(D)^G$ is $(\varpi)^{1/p^\infty}$-almost isomorphic to $D^{\sharp G}$. Since the functor of Witt vectors commutes with inverse limit and $A^\flat \to D^G$ is a $(\varpi^\flat)^{1/p^\infty}$-almost isomorphism, it follows that $W(A^\flat)$ is $[\varpi^\flat]^{1/p^\infty}$-almost isomorphic to $W(D)^G$. Furthermore, since $W(A^\flat)/(pu-[\varpi^\flat])W(A^\flat) \cong A$, we conclude that $D^{\sharp G}$ is $(\varpi)^{1/p^\infty}$-almost isomorphic to $A$. This verifies the first condition of Definition \ref{AlmostGaloisCov}. The verification of the second condition is again left as an exercise.

Finally, let us consider the general case and denote by $\widehat{A}$ the $\varpi$-adic completion of $A$. Recall the functor $\Phi_2:{\bf{F.Et}}^a(A) \to {\bf{F.Et}}(A[1/\varpi])$ in \cite[Corollary 5.12]{NS18}. Then $\Phi_2$ obviously sends a $(\varpi)^{1/p^\infty}$-almost $G$-Galois covering over $A$ to a $G$-Galois covering over $A[1/\varpi]$. Conversely, let $C \in {\bf{F.Et}}_{\rm{G}}(A[1/\varpi])$ with Galois group $G$. Then there is a $(\varpi)^{1/p^\infty}$-almost finite \'etale covering $B$ over $A$ such that $C \cong B[1/\varpi]=\Phi_2(B)$. That the functor $\Phi_2$ is an equivalence yields that the restricted action by $G$ on $B$ endows $A \to B$ with the structure of a $(\varpi)^{1/p^\infty}$-almost $G$-Galois covering over $A$. Hence the functor $\Phi_2$ gives an equivalence of categories between $(\varpi)^{1/p^\infty}$-almost $G$-Galois coverings over $A$ and $G$-Galois coverings over $A[1/\varpi]$. In the same way, one finds that the functor $\Phi_4$ in \cite[Corollary 5.12]{NS18} has the same conclusion. Next, let us show that $\Phi_3$ gives an equivalence of categories between $G$-Galois coverings over $A[1/\varpi]$ and $G$-Galois coverings over $\widehat{A}[1/\varpi]$. To this aim, notice that for a commutative ring $S$, a $G$-Galois covering over $S$ is a $G$-torsor over $\Spec(S)$ by \cite[Proposition 5.3.16]{Sz09}. Under the present hypothesis including the one that $A$ is $\varpi$-adically Henselian, the desired claim follows from \cite[Corollary 2.1.22(c)]{BC22} (which generalizes \cite[Theorem 5.8.14]{GR03}). Hence we conclude that the functor $\Phi_1$ from \cite[Corollary 5.12]{NS18} gives an equivalence of categories between $(\varpi)^{1/p^\infty}$-almost $G$-Galois coverings over $A$ and $(\varpi)^{1/p^\infty}$-almost $G$-Galois coverings over $\widehat{A}$. Combining the case of the complete case established above, we conclude that $\Phi$ is an equivalence.
\end{proof}

We are ready to prove the main result. We establish it for $(R_{\infty,p})^\flat$, because the proof of the case of $\widehat{R_{\infty,p}}$ can be derived in the same manner with less complications.

\begin{theorem}
\label{Galoistilting}
Let the notation be as above. Then we have the following assertions.
\begin{enumerate}
\item
$(R_{\infty,p})^\flat$ is a $p^\flat$-adically complete perfect integral domain, and $\Phi^k_{R_{\infty,p}}:(R_{\infty,p})^\flat \to R_{\infty,p}/pR_{\infty,p}$ is surjective and the kernel of $\Phi^k_{R_{\infty,p}}$ is the principal ideal generated $(p^\flat)^k$ for any $k \ge 1$ (see Definition \ref{tiltingring}).

\item
In view of $(1)$, define $B$ to be the unique maximal integral extension over $(R_\infty)^\flat$ such that $B$ is an integrally closed domain which is a subring of the integral domain $(R_{\infty,p})^\flat$ and the localization map $(R_\infty)^\flat[1/p^\flat] \to B[1/p^\flat]$ is a filtered colimit of all finite \'etale $(R_\infty)^\flat[1/p^\flat]$-algebras contained in $(R_{\infty,p})^\flat[1/p^\flat]$ (cf. Definition \ref{p-maximaletale}). Then $B$ is a perfect ring that is integral over $(R_{\infty})^\flat$ and $(R_{\infty,p})^\flat$ is equal to the $p^\flat$-adic completion of $B$.

\item
Set $A:=k[[p^\flat,x_2^\flat,\ldots,x_d^\flat]]$ and let $A^{1/p^\infty}$ be the perfect closure of $A$. Define $C$ to be the unique maximal integral extension over $A^{1/p^\infty}$ such that $C$ is an integrally closed domain which is a subring of the absolute integrally closed domain $A^+$ and the localization map $A^{1/p^\infty}[1/p^\flat] \to C[1/p^\flat]$ is a filtered colimit of all finite \'etale $A^{1/p^\infty}[1/p^\flat]$-algebras contained in $A^+[1/p^\flat]$. Then $\widehat{C} \cong (R_{\infty,p})^\flat$, where $\widehat{C}$ is the $p^\flat$-adic completion of $C$.
\end{enumerate}
\end{theorem}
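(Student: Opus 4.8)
The plan is to dispatch (1) first as a warm-up, then derive (2) from the almost purity theorem combined with the tilting equivalences of Proposition~\ref{GaloisCovProp}, and finally obtain (3) from (2) by descending along the Henselian pair $(A^{1/p^\infty},(p^\flat))$. For (1): perfectness is automatic because $S^\flat$ is a perfect $\mathbb{F}_p$-algebra for every ring $S$, and $p^\flat$-adic completeness holds because $\widehat{R_{\infty,p}}$ is integral perfectoid (Theorem~\ref{AlmostPure1}), so that $(R_{\infty,p})^\flat=(\widehat{R_{\infty,p}})^\flat$ is its tilt. That $(R_{\infty,p})^\flat$ is a domain is seen by noting that $R_{\infty,p}$ is integrally closed in $R_{\infty,p}[1/p]$ (Theorem~\ref{AlmostPure1}), so $R_{\infty,p}/p^nR_{\infty,p}\hookrightarrow R^+/p^nR^+$ for all $n$ exactly as in the proof of Proposition~\ref{valuationtilting}; passing to inverse limits and then to tilts gives an injection $(R_{\infty,p})^\flat\hookrightarrow(R^+)^\flat$, and the target is a domain by Heitmann's theorem~\cite{Hei22}. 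The statements about $\Phi^k_{R_{\infty,p}}$ follow from the structure theory of the tilt of a $p$-torsion free integral perfectoid ring containing a compatible system $p^\flat=(p,p^{1/p},\dots)$ of $p$-power roots of $p$; in particular $\Phi^1_{R_{\infty,p}}$ induces $(R_{\infty,p})^\flat/p^\flat(R_{\infty,p})^\flat\cong R_{\infty,p}/pR_{\infty,p}$.

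For (2), since $S^\flat$ depends only on $S/pS=\widehat S/p\widehat S$, we work with the integral perfectoid rings $\widehat{R_\infty}$ and $\widehat{R_{\infty,p}}$; here $\widehat{R_\infty}$ is a domain with connected spectrum, and since $R_\infty$ is an ascending union of the complete local rings $R[p^{1/p^n},x_2^{1/p^n},\dots,x_d^{1/p^n}]$, the pairs $(R_\infty,(p))$ and $(\widehat{R_\infty},(p))$ are Henselian, so Proposition~\ref{GaloisCovProp} applies with $\varpi=p$. By Definition~\ref{p-maximaletale} and the almost purity theorem (see Theorem~\ref{AlmostPure1} and its proof in \cite{Shi16}), $\widehat{R_{\infty,p}}$ is the $p$-adic completion of a filtered union $\bigcup_\lambda\widehat{B_\lambda}$ in which each $\widehat{R_\infty}\to\widehat{B_\lambda}$ is $(p)^{1/p^\infty}$-almost finite \'etale and the $\widehat{B_\lambda}[1/p]$ realize all finite \'etale covers of $\widehat{R_\infty}[1/p]$ lying inside $\widehat{R_{\infty,p}}[1/p]$. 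The tilting equivalence for almost finite \'etale extensions (Scholze--Kedlaya--Liu, as used in the proof of Proposition~\ref{GaloisCovProp}) turns these into $(p^\flat)^{1/p^\infty}$-almost finite \'etale extensions $(R_\infty)^\flat\to(\widehat{B_\lambda})^\flat$, with $(\widehat{B_\lambda})^\flat/p^\flat\cong\widehat{B_\lambda}/p$ by part (1) applied to $\widehat{B_\lambda}$, and Proposition~\ref{GaloisCovProp} matches the Galois covers occurring in the two towers. Inverting $p^\flat$ makes these genuine finite \'etale covers of $(R_\infty)^\flat[1/p^\flat]$ inside $(R_{\infty,p})^\flat[1/p^\flat]$, and since $(R_\infty)^\flat$ is an integrally closed domain (Proposition~\ref{valuationtilting}) and finite \'etale covers of the perfect ring $(R_\infty)^\flat[1/p^\flat]$ are again perfect, the integral closure of $(R_\infty)^\flat$ in their union is a perfect, integrally closed domain $B$, integral over $(R_\infty)^\flat$, with $B/p^\flat B\cong R_{\infty,p}/pR_{\infty,p}\cong(R_{\infty,p})^\flat/p^\flat$. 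This $B$ is the ring in the statement, and because $(R_{\infty,p})^\flat$ is $p^\flat$-adically complete and $p^\flat$-torsion free while $B\hookrightarrow(R_{\infty,p})^\flat$ is an isomorphism modulo every power of $p^\flat$, the $p^\flat$-adic completion of $B$ is $(R_{\infty,p})^\flat$.

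For (3), Proposition~\ref{basictilting} identifies $(R_\infty)^\flat$ with the $p^\flat$-adic completion of the perfect closure $A^{1/p^\infty}$ of $A=k[[p^\flat,x_2^\flat,\dots,x_d^\flat]]$, and $(A^{1/p^\infty},(p^\flat))$ is a Henselian pair, being an ascending union of the $(p^\flat)$-adically complete regular local rings $A^{1/p^k}\cong k[[(p^\flat)^{1/p^k},\dots,(x_d^\flat)^{1/p^k}]]$. Hence base change identifies the finite \'etale $A^{1/p^\infty}[1/p^\flat]$-algebras with the finite \'etale $(R_\infty)^\flat[1/p^\flat]$-algebras, compatibly with reduction modulo $p^\flat$ --- this is the completion invariance of the finite \'etale site over a Henselian pair, already used in the last step of the proof of Proposition~\ref{GaloisCovProp} --- and the ambient domains $A^+$ and $(R_{\infty,p})^\flat$ realize all of the covers in play, by definition of $A^+$ and by part (2) respectively. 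Thus the tower $C$ built over $A^{1/p^\infty}$ has the same reduction modulo $p^\flat$ as the tower $B$ of part (2), namely $R_{\infty,p}/pR_{\infty,p}$, and the argument of the previous paragraph --- $\widehat C$ is $p^\flat$-adically complete and $p^\flat$-torsion free with $\widehat C/p^\flat\cong R_{\infty,p}/pR_{\infty,p}\cong(R_{\infty,p})^\flat/p^\flat$ --- yields $\widehat C\cong\widehat B\cong(R_{\infty,p})^\flat$.

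The main obstacle lies in part (2): one must interchange the tilt, which is an inverse limit along the Frobenius, with the maximal-almost-\'etale-cover construction, which is a filtered colimit followed by $p$-adic completion, while simultaneously passing between \emph{almost} finite \'etale extensions of the complete ring $\widehat{R_\infty}$ and \emph{honestly} finite \'etale extensions after inverting $p^\flat$. Both difficulties are absorbed by systematically reducing modulo $p^\flat$, where the identity $\widehat S/p\widehat S=S/pS$ together with part (1) makes the two towers literally coincide, and by working with the \emph{full} tower, in which almost isomorphisms between successive finite stages become genuine isomorphisms in the colimit.
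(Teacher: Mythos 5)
Your overall strategy matches the paper's: establish (1) via the perfectoid structure of $\widehat{R_{\infty,p}}$ and Heitmann's theorem, prove (2) by transporting the almost \'etale tower through the tilting/Galois correspondence and reducing modulo $p^\flat$, and deduce (3) by descent through the Henselian pair $(A^{1/p^\infty},(p^\flat))$. Part (1) is fine and part (3) is acceptable modulo the issues in (2). But there is a genuine gap in part (2) at exactly the point you flag as ``the main obstacle.''

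You set $B$ to be the integral closure of $(R_\infty)^\flat$ inside the union of the $(\widehat{B_\lambda})^\flat$ (or their generic fibers), and then assert $B/p^\flat B\cong R_{\infty,p}/pR_{\infty,p}$. For this to follow you need $(\widehat{B_\lambda})^\flat\subseteq B$, i.e.\ that each $(\widehat{B_\lambda})^\flat$ is \emph{genuinely} integral over $(R_\infty)^\flat$; only then does the union reduce mod $p^\flat$ onto $R_{\infty,p}/pR_{\infty,p}$ and does topological Nakayama close the argument. But all that the Scholze/Kedlaya--Liu correspondence and Proposition~\ref{GaloisCovProp} hand you is that $(R_\infty)^\flat\to (\widehat{B_\lambda})^\flat$ is $(p^\flat)^{1/p^\infty}$-\emph{almost} finite \'etale (or almost $G$-Galois), so a priori $(\widehat{B_\lambda})^\flat$ could contain elements that are only \emph{almost} integral over $(R_\infty)^\flat$ and hence fall outside your $B$; in that case $B/p^\flat B$ need not recover all of $R_{\infty,p}/pR_{\infty,p}$. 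Your closing remark that ``almost isomorphisms between successive finite stages become genuine isomorphisms in the colimit'' does not address this: the colimit here runs over the directed system of covers, not along Frobenius, and almost-defects at a fixed finite stage persist in that colimit.

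The paper's proof closes precisely this gap, and the mechanism is not automatic. After arranging (via \cite[Lemma 9.4]{NS23}) for a $G$-Galois tower, one gets an injective, $(p^\flat)^{1/p^\infty}$-almost surjective map $(R_\infty)^\flat\to (T^\flat)^G$; the paper upgrades this to an honest equality by noting that $(R_\infty)^\flat$ is \emph{completely integrally closed} in $(R_\infty)^\flat[1/p^\flat]$ --- this uses Lemma~\ref{CompleteInt1} plus \cite[Main Theorem 1]{EHS23}, which transfers integral closedness across the tilting operation --- and then observing that perfectness turns ``$(p^\flat)^{1/p^n}x\in(R_\infty)^\flat$ for all $n$'' into ``$p^\flat x^{p^n}\in(R_\infty)^\flat$ for all $n$,'' i.e.\ almost integrality of $x$. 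Once $(R_\infty)^\flat=(T^\flat)^G$, each $T^\flat$ is honestly integral over $(R_\infty)^\flat$, the union $\widetilde B=\varinjlim S^\flat$ is integral, $\widetilde B/p^\flat\widetilde B\cong R_{\infty,p}/pR_{\infty,p}$, and the completion identification follows; a separate (un)tilting argument then identifies $\widetilde B$ with the maximal cover $B$ from the statement. Without this complete-integral-closure step your construction of $B$ and the isomorphism $B/p^\flat B\cong R_{\infty,p}/pR_{\infty,p}$ are unsupported, and the same issue propagates into your part (3).
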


\begin{proof}
$(1)$: By Theorem \ref{AlmostPure1}, the Frobenius endomorphism $R_{\infty,p}/pR_{\infty,p} \to R_{\infty,p}/pR_{\infty,p}$ is surjective with kernel generated by $p^{1/p}$. Using this fact, it can be checked that $(R_{\infty,p})^\flat$ is $p^\flat$-adically complete by applying the proof of \cite[Theorem 3.10(1)(a)]{INS23}, and $\Phi^k_{R_{\infty,p}}$ is surjective with kernel generated by $(p^\flat)^k$ for $k \ge 1$. It remains to prove that $(R_{\infty,p})^\flat$ is a domain. For this, recall from \cite{Hei22} that the $p$-adic completion of the absolute integral closure $R^+$ is a domain. This in turn implies that $(R^+)^\flat$ is a domain in view of \cite[Proposition 2.6]{EHS23}. Let us show that the natural ring injection $R_{\infty,p} \hookrightarrow R^+$ induces an injection $R_{\infty,p}/pR_{\infty,p} \hookrightarrow R^+/pR^+$. For this, it suffices to see that $p R_{\infty,p}=R_{\infty,p} \cap pR^+$. This easily follows from the fact that $R_{\infty,p}$ is integrally closed in $R_{\infty,p}[1/p]$. Applying the tilting operation, we see that $R_{\infty,p}/pR_{\infty,p} \hookrightarrow R^+/pR^+$ lifts to an injection $(R_{\infty,p})^\flat \hookrightarrow (R^+)^\flat$. Thus, $(R_{\infty,p})^\flat$ is an integral domain.

$(2)$: By the definition of $R_{\infty,p}$, we can write
\begin{equation}
\label{colimitetale}
R_{\infty,p}=\varinjlim S,
\end{equation}
where $R_\infty \to S$ runs over all integral extensions of normal domains such that $R_\infty[1/p] \to S[1/p]$ is finite \'etale. In particular, $\Frac(R_\infty) \to \Frac(S)$ is a finite field extension. Then there exists an integral extension $R_\infty[1/p] \to T'$ which factors as $R_\infty[1/p] \to S[1/p] \to T'$ such that $R_\infty[1/p] \to T'$ is a $G$-Galois covering by applying \cite[Lemma 9.4]{NS23}. In particular, $R_\infty[1/p] \to T'=T[1/p]$ is finite \'etale. Let $T$ be the integral closure of $R_\infty$ in $T'$. In this case, $T$ (and also $T'$) is a normal domains. Indeed, let $L$ be the Galois closure of $\Frac(R_\infty[1/p]) \to \Frac(T[1/p])$. Then we can show that $T'$ can be taken to be the integral closure of $R_\infty[1/p]$ in $L$ because $R_\infty[1/p]$ is a normal domain.

To prove the theorem, after replacing $S$ with $T$, we may assume that $R_\infty[1/p] \to S[1/p]$ is a $G$-Galois covering (in particular, finite \'etale). Then $R_\infty \to S$ is a $(p)^{1/p^\infty}$-almost finite \'etale extension by the Witt-perfect almost purity theorem (see \cite[Theorem 5.9]{NS18}). Now we want to show that $R_\infty \to S$ is a $(p)^{1/p^\infty}$-almost $G$-Galois covering. First, we have $R_\infty=S^G$ by the normality of $R_\infty$ and $R_\infty[1/p]=S[1/p]^G$, which verifies the first condition of Definition  \ref{AlmostGaloisCov}. As for the second condition, note that the diagonal mappping $S \to \prod_{g \in G} S$ factors as
$$
S \xrightarrow{h_1} S \otimes_{R_\infty} S \xrightarrow{h_2} \prod_{g \in G}S,
$$
where $h_1(b):=1 \otimes b$ and $h_2(b_1 \otimes b_2):=(g(b_1)b_2)_{g \in G}$ as defined in Definition  \ref{AlmostGaloisCov}. As $R_\infty[1/p] \to S[1/p]$ is a $G$-Galois covering, we get that $h_2[1/p]$ is bijective. Moreover, as the diagonal mapping $S \to \prod_{g \in G} S$ is integral, it follows that $h_2$ is also integral. Since $R_\infty \to S$ is $(p)^{1/p^\infty}$-almost finite, we find that $h_2:S \to S \otimes_{R_\infty} S$ is $(p)^{1/p^\infty}$-almost finite \'etale by base change invariance of almost finite \'etaleness. In particular, $p$ is a $(p)^{1/p^\infty}$-almost regular element on $S \otimes_{R_\infty} S$. As $h_2[1/p]$ is bijective, se see that $h_2$ is $(p)^{1/p^\infty}$-almost injective. We want to show that $h_2$ is $(p)^{1/p^\infty}$-almost surjective. Consider the composite map $S \xrightarrow{h_1} S \otimes_{R_\infty} S \to h_2(S \otimes_{R_\infty}S)$. Since $h_2$ is $(p)^{1/p^\infty}$-almost injective, it follows that $S \to h_2(S \otimes_{R_\infty}S)$ is $(p)^{1/p^\infty}$-almost finite \'etale. Now we have an injection $h_2(S \otimes_{R_\infty}S) \hookrightarrow \prod_{g \in G}S$. We have already seen that this map is integral and becomes bijective after inverting $p$. Moreover, it is evident that $\prod_{g \in G} S$ is integrally closed after inverting $p$. On the other hand, the $(p)^{1/p^\infty}$-almost finite \'etaleness of $S \to h_2(S \otimes_{R_\infty}S)$ gives that the integral closure of $h_2(S \otimes_{R_\infty}S)$ in $h_2(S \otimes_{R_\infty}S)[1/p]$ is $(p)^{1/p^\infty}$-almost isomorphic to $h_2(S \otimes_{R_\infty}S)$. These facts combine together to conclude that $h_2(S \otimes_{R_\infty}S) \hookrightarrow \prod_{g \in G}S$ is $(p)^{1/p^\infty}$-almost surjective. That is, $h_2$ is $(p)^{1/p^\infty}$-almost bijective and $R_\infty \to S$ is a $(p)^{1/p^\infty}$-almost $G$-Galois covering, as desired.
 
Since $R_n$ is $\varpi$-adically complete, it is $\varpi$-adically Henselian by \cite[Tag 0ALJ]{Stacks}. Then the filtered colimit $R_\infty$ is also $\varpi$-Henselian by \cite[Tag 0FWT]{Stacks}. Moreover, $\Spec(R_\infty)$ is obviously connected. Then it follows that $(R_\infty)^\flat \to S^\flat$ is a $(p^\flat)^{1/p^\infty}$-almost $G$-Galois covering by Proposition \ref{GaloisCovProp}. The natural induced map
\begin{equation}
\label{almostsurjective1}
(R_\infty)^\flat \to (S^\flat)^G
\end{equation}
is injective and $(p^\flat)^{1/p^\infty}$-almost surjective. We claim that $(\ref{almostsurjective1})$ is indeed an equality, which will imply that $(R_\infty)^\flat \to S^\flat$ is an integral extension. So let us prove this claim. By definition, $R_\infty$ is integrally closed in $R_\infty[1/p]$. So $R_\infty$ is completely integrally closed in $R_\infty[1/p]$ by Lemma \ref{CompleteInt1}. Hence it follows from \cite[Main Theorem 1]{EHS23} that $(R_\infty)^\flat$ is completely integrally closed in $(R_\infty)^\flat[1/p^\flat]$. Pick an element $x \in (S^\flat)^G$. By $(p^\flat)^{1/p^\infty}$-almost surjectivity of $(\ref{almostsurjective1})$, we obtain $x \in (R_\infty)^\flat[1/p^\flat]$ and 
$$
(p^\flat)^{1/p^n} x \in (R_\infty)^\flat;~\forall n>0.
$$
In other words, we have $p^\flat x^{p^n} \in (R_\infty)^\flat$ for all $n>0$, which says that $x \in (R_\infty)^\flat[1/p^\flat]$ is almost integral over $(R_\infty)^\flat$. So we have $x \in (R_\infty)^\flat$, as desired. Next set
\begin{equation}
\label{colimitetale2}
\widetilde{B}:=\varinjlim S^\flat,
\end{equation}
where the direct limit runs over all $S$ appearing in the directed system of $(\ref{colimitetale})$. Then $(R_{\infty})^\flat \hookrightarrow \widetilde{B} \hookrightarrow (R^+)^\flat$ and we have just proved that $(R_\infty)^\flat \to \widetilde{B}$ is an integral extension. Since the $p$-adic completion of $S$ is integral perfectoid, it follows that $\Phi^k_{S}:S^\flat \to S/pS$ is surjective with kernel $p^\flat S^\flat$. This implies that $\widetilde{B}$ surjects onto $R_{\infty,p}/pR_{\infty,p}$ because of $(\ref{colimitetale})$. Hence $\widetilde{B}/p^\flat \widetilde{B} \cong R_{\infty,p}/pR_{\infty,p}$ follows. Let $\widehat{\widetilde{B}}$ be the $p^\flat$-adic completion. By topological Nakayama's lemma \cite[Theorem 8.4]{M86}, we get an isomorphism $\widehat{\widetilde{B}} \cong (R_{\infty,p})^\flat$, as desired.

Now it remains to prove that $\widetilde{B}$ defined in $(\ref{colimitetale2})$ is identified with $B$ as asserted in $(2)$ of the theorem. By construction, it is clear that $\widetilde{B} \subseteq B$. It suffices to show the following assertion.
\begin{enumerate}
\item[$\bullet$]
Let $T'$ be an integral domain such that $(R_\infty)^\flat[1/p^\flat]  \hookrightarrow T' \hookrightarrow (R_{\infty,p})^\flat[1/p^\flat]$ and $(R_\infty)^\flat[1/p^\flat] \to T'$ is finite \'etale. Denote by  $T$ the integral closure of $(R_\infty)^\flat$ in $T'$. Then $T \subseteq \widetilde{B}$.
\end{enumerate}
Let $T^\sharp$ be the untilt of $T$. Then $T^\sharp$ is a $(p)^{1/p^\infty}$-almost finite \'etale extension over $\widehat{R_\infty}$ and $T^\sharp \subseteq \widehat{R_{\infty,p}}$. By applying \cite[Corollary 5.12]{NS18}, we find that $T^\sharp$ arises as a $p$-adic completion of $E$ which is a $(p)^{1/p^\infty}$-almost finite \'etale extension over $R_\infty$ such that $R_\infty \subseteq E \subseteq R_{\infty,p}$. Since $E^\flat=T$ by tilting correspondence for perfectoid rings, it follows that $T \subseteq \widetilde{B}$ in view of ($\ref{colimitetale2})$ and we are done.

$(3)$: By Proposition \ref{basictilting}, we know that the $p^\flat$-adic completion $\widehat{A^{1/p^\infty}}$ is equal to $(R_\infty)^\flat$. Moreover, $(R_0)_{(p)}^{s.\flat}=k[[p^\flat,x_2^\flat,\ldots,x_d^\flat]]$ is $p^\flat$-adically Henselian. So the colimit $A^{1/p^\infty}=\varinjlim_{n>0} A^{1/p^n}$ is also $p^\flat$-adically Henselian in view of \cite[Tag 0FWT]{Stacks}. By taking these observations into account and considering two basic setups $(A^{1/p^\infty},(p^\flat)^{1/p^\infty})$ and $((R_\infty)^\flat,(p^\flat)^{1/p^\infty})$, the claim follows from the \'etale-correspondence \cite[Corollary 5.12]{NS18} as follows. Let $A^{1/p^\infty} \to T$ be a $(p^\flat)^{1/p^\infty}$-almost finite \'etale extension such that $T$ is a normal domain that is a subring of $A^+$. Then, it suffices to see that the functor $\Phi_1$ of \cite[Corollary 5.12]{NS18} sends $T$ to a normal domain that is a subring of $(R_{\infty,p})^\flat$. The functor $\Phi_1$ is given by $T \mapsto \widehat{T}:=(R_\infty)^\flat \otimes_{A^{1/p^\infty}} T$. Then $(R_\infty)^\flat \to \widehat{T}$ is $(p^\flat)^{1/p^\infty}$-almost finite \'etale (or equivalently, $(R_\infty)^\flat[1/p^\flat] \to \widehat{T}[1/p^\flat]$ is finite \'etale). By Proposition \ref{valuationtilting}, $(R_\infty)^\flat$ is a $p^\flat$-adically complete integrally closed domain. In this situation, we can apply \cite[Proposition 4.1]{NS18} to say that $\widehat{T}$ is $p^\flat$-adically complete. Since $(R_\infty)^\flat$ is the $p^\flat$-adic completion of $A^{1/p^\infty}$, we find that $\widehat{T}$ is indeed the $p^\flat$-adic completion of $T$. By \cite[Lemma 3.3]{NS23}, $\widehat{T}$ is integrally closed in $\widehat{T}[1/p^\flat]$. Since $(R_\infty)^\flat$ is a normal domain and $(R_\infty)^\flat[1/p^\flat] \to \widehat{T}[1/p^\flat]$ is finite \'etale, it follows that $\widehat{T}$ is a normal ring, that is, it is a finite product of normal domains. What remains to be proved is that $\widehat{T}$ is an integral domain. This fact can be checked by using Heitmann's result. Note that since $T$ is a normal domain, the integral extension $T \hookrightarrow A^+$, where $A^+$ is the absolute integral closure of $A$, induces an injection $T/p^n T \hookrightarrow A^+/p^n A^+$ for all $n>0$. Taking the inverse limit, the injection follows: $\widehat{T} \hookrightarrow \widehat{A^+}$. By the main result of \cite{Hei22}, we conclude that $\widehat{T}$ is a domain. In other words, $\widehat{T}$ is a normal domain that is a subring of $(R_{\infty,p})^\flat$. Note that $C=\varinjlim T$, where $A^{1/p^\infty} \to T$ ranges over all $(p^\flat)^{1/p^\infty}$-almost finite \'etale extensions contained in $A^+$ such that $T$ is a normal domain. Then this gives us $C \hookrightarrow B$. On the other hand, the above argument together with an equivalence of $\Phi_1$ gives that $B \hookrightarrow \varinjlim \widehat{T} \hookrightarrow \widehat{C}$, where the completion is $p^\flat$-adic. Hence we conclude from the assertion $(2)$ that $\widehat{C} \cong \widehat{B} \cong (R_{\infty,p})^\flat$, as desired.
\end{proof}

\begin{corollary}
\label{Galoistilting2}
Let the assumptions be as in Theorem \ref{Galoistilting}. Let $\widehat{R_\infty} \to \widehat{R_{\infty,p}}$ be the $p$-adic completin of $R_\infty \to R_{\infty,p}$. Let $C$ be the unique maximal integral extension over $\widehat{R_\infty}$ such that $C$ is an integrally closed domain which is a subring of the integral domain $\widehat{R_{\infty,p}}$ and the localization map $\widehat{R_\infty}[1/p] \to C[1/p]$ is a filtered colimit of all finite \'etale $\widehat{R_\infty}[1/p]$-algebras contained in $\widehat{R_{\infty,p}}[1/p]$. Then $C$ is integral over $\widehat{R_{\infty}}$ and $\widehat{R_{\infty,p}}$ is equal to the $p$-adic completion of $C$.
\end{corollary}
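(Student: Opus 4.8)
The plan is to transcribe the proof of Theorem~\ref{Galoistilting}(2), with the tilting operation $(-)^\flat$ replaced everywhere by $p$-adic completion $\widehat{(-)}$ and, correspondingly, with the tilting equivalence of Proposition~\ref{GaloisCovProp} replaced by the ``completion'' half of its proof: the equivalence between the category of $(p)^{1/p^\infty}$-almost $G$-Galois coverings over $R_\infty$ and the one over $\widehat{R_\infty}$, furnished by \cite[Corollary~5.12]{NS18} together with the $G$-torsor descent \cite[Theorem~5.8.14]{GR03} (the hypotheses that $R_\infty$ is $p$-adically Henselian and $\Spec(R_\infty)$ is connected being satisfied as in that proof). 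Concretely, one first writes $R_{\infty,p}=\varinjlim S$ as in $(\ref{colimitetale})$, the colimit running over all integral extensions $R_\infty\to S$ with $R_\infty\subseteq S\subseteq R^+$, $S$ a normal domain, and $R_\infty[1/p]\to S[1/p]$ finite \'etale. By \cite[Lemma~9.4]{NS23} one may, for the purpose of this colimit, enlarge $S$ so that $R_\infty[1/p]\to S[1/p]$ is a $G$-Galois covering for a finite group $G$ (allowing $S$ to be a finite product of normal domains); then $R_\infty\to S$ is a $(p)^{1/p^\infty}$-almost $G$-Galois covering by the Witt-perfect almost purity theorem \cite[Theorem~5.9]{NS18}. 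Applying the completion equivalence, $\widehat{R_\infty}\to\widehat S$ is again a $(p)^{1/p^\infty}$-almost $G$-Galois covering, so the natural map $\widehat{R_\infty}\to(\widehat S)^G$ is injective and $(p)^{1/p^\infty}$-almost surjective.

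The crucial step, exactly as in Theorem~\ref{Galoistilting}(2), is to upgrade this to the equality $\widehat{R_\infty}=(\widehat S)^G$: once this is known, $\widehat{R_\infty}\to\widehat S$ is integral, since $\widehat S$ is integral over the invariants of the finite group $G$. For the equality one uses that $\widehat{R_\infty}$ is \emph{completely} integrally closed in $\widehat{R_\infty}[1/p]$. Indeed, given $x\in(\widehat S)^G$, almost surjectivity gives $p^{1/p^n}x\in\widehat{R_\infty}$ for all $n>0$, and since $\widehat{R_\infty}$ contains compatible $p$-power roots of $p$ this forces $p\cdot x^{n}\in\widehat{R_\infty}$ for all $n>0$ (apply it to $p^{1/p^m}x$ with $p^m\ge n$), so $x$ is almost integral over $\widehat{R_\infty}$ and hence $x\in\widehat{R_\infty}$. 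The required complete integral closedness is obtained as in the proof of Theorem~\ref{Galoistilting}: $R_\infty$ is integrally closed --- hence, by Lemma~\ref{CompleteInt1} with $A=R_n$ and $T=R_\infty$, completely integrally closed --- in $R_\infty[1/p]$, so $(\widehat{R_\infty})^\flat=(R_\infty)^\flat$ is completely integrally closed in $(R_\infty)^\flat[1/p^\flat]$ by \cite[Main Theorem~1]{EHS23}, and applying \cite[Main Theorem~1]{EHS23} once more to the perfectoid ring $\widehat{R_\infty}$ returns the claim.

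It remains to package this. Setting $\widetilde C:=\varinjlim\widehat S$ over the directed system of $(\ref{colimitetale})$, we get $\widehat{R_\infty}\subseteq\widetilde C\subseteq\widehat{R^+}$ with $\widehat{R_\infty}\to\widetilde C$ integral, by the previous paragraph and the fact that a filtered colimit of integral extensions is integral. Since $\widehat S/p^k\widehat S\cong S/p^kS$ for every such $S$ and every $k>0$, we obtain $\widetilde C/p^k\widetilde C\cong R_{\infty,p}/p^kR_{\infty,p}\cong\widehat{R_{\infty,p}}/p^k\widehat{R_{\infty,p}}$, whence the $p$-adic completion of $\widetilde C$ equals $\widehat{R_{\infty,p}}$ by topological Nakayama's lemma \cite[Theorem~8.4]{M86}. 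Finally $\widetilde C=C$, argued as in Theorem~\ref{Galoistilting}(2): the inclusion $\widetilde C\subseteq C$ holds because each $\widehat S$ is the integral closure of $\widehat{R_\infty}$ in the finite \'etale $\widehat{R_\infty}[1/p]$-algebra $\widehat S[1/p]\subseteq\widehat{R_{\infty,p}}[1/p]$ (by \cite[Lemma~3.3]{NS23}), and for the reverse inclusion one realizes, via \cite[Corollary~5.12]{NS18} (using \cite[Proposition~4.1]{NS18} to check that the relevant base change is $p$-adically complete), every finite \'etale $\widehat{R_\infty}[1/p]$-subalgebra of $\widehat{R_{\infty,p}}[1/p]$ as the $p$-adic completion of a finite \'etale $R_\infty[1/p]$-subalgebra of $R_{\infty,p}[1/p]$, whose integral closure in $\widehat{R_{\infty,p}}$ is one of the terms $\widehat S$. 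I expect the two genuinely delicate points to be the complete-integral-closedness input of the second paragraph --- which is precisely what turns ``almost integral'' into ``integral'' and hence makes $\widehat{R_\infty}\to\widehat S$ an honest integral extension --- and the \'etale-descent identification $\widetilde C=C$; the remainder is a routine translation of the proof of Theorem~\ref{Galoistilting}(2) into mixed characteristic.
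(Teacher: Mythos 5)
Your proposal is correct and follows precisely the route the paper intends: the paper's own proof of Corollary~\ref{Galoistilting2} simply says that the argument of Theorem~\ref{Galoistilting}(2) goes through after symbolic changes, and you have written out exactly that translation, replacing the tilting correspondence with the completion/decompletion half of Proposition~\ref{GaloisCovProp} and replacing the Frobenius trick $\bigl((p^\flat)^{1/p^n}x\bigr)^{p^n}=p^\flat x^{p^n}$ by the elementary manipulation $p\cdot x^n = p^{(p^m-n)/p^m}\cdot(p^{1/p^m}x)^n$ available because $\widehat{R_\infty}$ contains all $p$-power roots of $p$. The one point worth flagging is the step deducing that $\widehat{R_\infty}$ is completely integrally closed in $\widehat{R_\infty}[1/p]$ from $(R_\infty)^\flat=(\widehat{R_\infty})^\flat$ being so: this uses \cite[Main Theorem~1]{EHS23} in the ``untilting'' direction, so one should confirm that the cited theorem is stated as an equivalence (or appeal instead to Proposition~\ref{valuationtilting} together with an argument that integral closedness and complete integral closedness coincide for $\widehat{R_\infty}$); granting this, your proof is sound.
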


\begin{proof}
The proof of Theorem \ref{Galoistilting} works in the present situation by making symbolic and minor changes. So we omit the details.
\end{proof}

\end{document}